\documentclass[preprint,review,12pt]{elsarticle}
\usepackage{amsmath,amssymb,amsfonts,amsthm}
\usepackage{stmaryrd}
\usepackage{float}
\usepackage{natbib}
\usepackage{hyperref}
\usepackage{array}
\usepackage{background}
\SetBgContents{}

\biboptions{numbers,sort&compress}

\begin{document}

\begin{frontmatter}

\title{Numerical Methods for the Bogoliubov-Tolmachev-Shirkov model in superconductivity theory\tnoteref{1}} \tnotetext[1]{ Email: zhihaoge@henu.edu.cn, liruihua\_henu@163.com, fax:+86-371-23881696.\\
*Corresponding\ author.}

\author{Zhihao G${\rm e^*}$,\ Ruihua Li
}

\address{ School of Mathematics and Statistics\ \& Institute of Applied Mathematics, Henan University, Kaifeng 475004, P.R. China
}

\begin{abstract}
In the work, the numerical methods are designed for the Bogoliubov-Tolmachev-Shirkov model in superconductivity theory. The numerical methods are novel and  effective to determine the critical transition temperature and approximate to the energy gap function of the above model. Finally, a numerical example confirming the theoretical results is presented.
\end{abstract}

\begin{keyword}
 Bogoliubov-Tolmachev-Shirkov model; Critical Temperature; Numerical Methods.
\end{keyword}

\end{frontmatter}

\thispagestyle{empty}


\vskip 0.5cm
\baselineskip 16pt
\newtheorem{thm}{Theorem}[section]
\newtheorem{lma}{Lemma}[section]
\newtheorem{rem}{Remarks}[section]
\newtheorem{assu}{Assumption}[section]
\newtheorem{definition}{Definition}[section]
\renewcommand{\theequation}{\arabic{section}.\arabic{equation}}

\section{Introduction}
\setcounter{equation}{0}
In the Bardeen-Cooper-Schrieffer(BCS) quantum theory of superconductivity, the superconducting state is characterized by a positive gap function, $\Delta(\textbf{x})$, which is the solution of the BCS equation
\begin{equation}\begin{split}
\Delta(\textbf{x})=\int_{\Omega}d\textbf{y}K(\textbf{x}, \textbf{y})\varphi_{\beta}
(\textbf{y}, \Delta(\textbf{y})),\label{eq1.1}
\end{split}\end{equation}
where
\begin{equation}\begin{split}
\varphi_{\beta}(\textbf{y},\Delta(\textbf{y}))
=H_{\beta}(((\textbf{y})^{2}+\Delta^{2}(\textbf{y}))^{1/2})
\Delta(\textbf{y}),\label{eq1.2}
\end{split}\end{equation}
with
\begin{equation}\begin{split}
H_{\beta}(t)=\frac{\tanh(1/2\beta t)}{t}.\label{eq1.3}
\end{split}\end{equation}
Where $\Omega$ is a bouned region, $\beta$ is the inverse of the absolute temperature, $T\geq0$, $K(\textbf{x}, \textbf{y})=-V_{\textbf{x}\textbf{y}}$ the negative matrix elements of the interaction potential of electrons with wave vectors $\textbf{x}$, $\textbf{y}\in\mathbb{R}^{3}$, and $\textbf{x}^{2}=|\textbf{x}|^{2}$, where $V_{\textbf{x}\textbf{y}}$ is generally the sum of two term: the first term, positive, arise from the repulsive coulomb force, while the second one, negative, from the attractive phonon force. As for the physical solution of BCS model, some researchers have studied, such as \cite{Hugenholtz}, \cite{Van Hemmen}, \cite{Shuji Watanabe}, \cite{Abraham Freiji}, \cite{X.H.Zheng}, \cite{Christian Hainzl} and so on.

For simplicity, one often consider the BCS gap equation in one dimension:
\begin{equation}\begin{split}
\Delta(x)=\int_{I}K(x, y)\frac{\tanh((1/2T)\sqrt{y^{2}
+\Delta^{2}(y)})}{\sqrt{y^{2}+\Delta^{2}(y)}}\Delta(y)dy,\label{eq3.1}
\end{split}\end{equation}
where $I=[-a, a]$ is a finite interval, $T\geq0$ is the absolute temperature, $\Delta(x)$ is the energy gap function so that $\Delta(x)=0$ corresponds to the normal phase and $\Delta(x)\neq0$ corresponds to the superconducting phase,
the original BCS assumption was given that the interaction kernel $K(x, y)$ is positive throughout the cut-off range from the Fermi surface up to a level $a>0$, which implies that the attractive phonon interaction is everywhere dominant.

Recently,  under the case of the interaction kernel $K(\textbf{x}, \textbf{y})$ that
\begin{equation}\begin{split}
 K(\textbf{x},\textbf{y})>0, K(\textbf{x}, \textbf{y})\leq \sigma(\textbf{y}),\ \frac{\sigma(\textbf{x})}{\textbf{x}^{2}+1}\in L(\mathbb{R}^{3}),\label{eq141023-1}
\end{split}\end{equation}
Du and Yang in \cite{Du-Yang} give some theoretical results: the BCS equation (\ref{eq1.1}) has a positive gap solution $\Delta(\textbf{x})>0$, representing the occurrence of superconductivity, while for $T=1/\beta>1/\beta_{c}=T_{c}$, the only solution of (\ref{eq1.1}) is the trivial one, $\Delta(\textbf{x})\equiv 0$, indicating the dominance of the normal phase; also give  a numerical method by the Min-Min scheme and Max-Max scheme to determine a critical temperature $T_{c}>0$.

However, this assumption (\ref{eq141023-1}) is only a simplified one. In order to make the model more realistic, Bogoliubov, Tolmachev, and Shirkov in \cite{Bogoliubov} considered the model (\ref{eq3.1}) in which the interaction kernel function $K(x, y)$ is given by the form
\begin{equation}\begin{split}
K(x, y)=K_{phonon}(x, y)+K_{Coulomb}(x, y),\label{eq140715-4.1}
\end{split}\end{equation}
where
\begin{equation}\begin{split}
K_{phonon}(x, y)&\equiv\frac{K_{1}}{2}>0,|x|,|y|<a;\\
K_{phonon}(x, y)&=0\quad {\rm otherwise},\\
K_{Coulomb}(x, y)&\equiv-\frac{K_{2}}{2}<0,|x|,|y|<b;\\
K_{Coulomb}(x, y)&=0\quad {\rm otherwise},\label{eq140715-4.3}
\end{split}\end{equation}
and $K_1, K_2$ are constants, $a>0$ is normally taken to be the Debye energy, $a=\hbar\omega_{D}$, and $b>a$ is a cut-off energy for the range of the screened Coulomb repulsion.

Since the kernel of the Bogoliubov-Tolmachev-Shirkov model is not positive but alternating, so the numerical methods in \cite{Du-Yang} do not work. And as we have known, there exist no effective numerical methods to handle this case. So, to overcome the above difficulties, we will develop a new numerical method to deal with the above model in this work.

The paper is organized as follows. In section 2, we design the Min-Mixed scheme and Max-Mixed scheme to Bogoliubov-Tolmachev-Shirkov model. And we show that these approximations lead to two numerical critical temperatures $\tau^{'}_{c}$ and $\tau^{"}_{c}$, and $\tau^{'}_{c}\leq T_{c}\leq\tau^{"}_{c}$. And also, we prove that there exist $(u,v)_{m}$ and $(u,v)_{M}$ such that $(u,v)_{m}\leq(u,v)\leq(u,v)_{M}$. In section 3, we  give a numerical test confirming the theoretical numerical results, and we obtain some important and interesting physical phenomenon.

\section{Numerical Methods }
\setcounter{equation}{0}

For the Bogoliubov-Tolmachev-Shirkov model, physicists expect the existence of a unique transition temperature $T_{c}>0$ so that, when $T<T_{c}$, (\ref{eq3.1}) has a positive solution representing the superconducting phase, but when $T>T_c$, the only solution is the trivial zero solution, representing the normal phase. Besides, as $T\rightarrow T_c$, the positive solution goes to zero.

With this form of the interaction kernel reflecting the mixed interaction of the phonon attraction and the Coulomb repulsion, one seeks(see \cite{Bogoliubov}\cite{Khalatnikov}\cite{Rickayzen}) a piecewise constant solution of the form
\begin{equation}\begin{split}
\Delta(x)&=\Delta_{1},|x|<a;\\
\Delta(x)&=\Delta_{2},a<|x|<b;\\
\Delta(x)&=0\quad {\rm otherwise}.\label{eq140715-4.4}
\end{split}\end{equation}
Hence, using (\ref{eq3.1}), (\ref{eq140715-4.1}), (\ref{eq140715-4.3}) and (\ref{eq140715-4.4}), we arrive at the coupled system
\begin{equation}\begin{split}
\Delta_1&=(K_{1}-K_{2})A_{\beta}(\Delta_{1})-K_{2}B_{\beta}(\Delta_{2}),\\
\Delta_2&=-K_{2}(A_{\beta}(\Delta_{1})+B_{\beta}(\Delta_{2})),\label{eq140715-4.5}
\end{split}\end{equation}
where $A_{\beta}$ and $B_{\beta}$ are the nonlinear transformations defined by
\begin{equation}\begin{split}
A_{\beta}(\Delta)&=\Delta\int_{0}^{a}f_{\beta}(\sqrt{\Delta^{2}+x^{2}})dx
=\Delta\int_{0}^{a}\frac{\tanh(1/2\beta\sqrt{\Delta^{2}+x^{2}})}{\sqrt{\Delta^{2}+x^{2}}}dx,\\
B_{\beta}(\Delta)&=\Delta\int_{a}^{b}f_{\beta}(\sqrt{\Delta^{2}+x^{2}})dx
=\Delta\int_{a}^{b}
\frac{\tanh(1/2\beta\sqrt{\Delta^{2}+x^{2}})}{\sqrt{\Delta^{2}+x^{2}}}dx.\label{eq140715-4.6}
\end{split}\end{equation}

The normal phase is characterized by the trivial solution of (\ref{eq140715-4.5}): $\Delta_{1}=0,\ \Delta_{2}=0$, and the superconducting phase is characterized by any nontrivial solution of (\ref{eq140715-4.5}) of the form
\begin{equation}\begin{split}
\Delta_{1}>0,\ \Delta_{2}<0.\label{eq140715-4.7}
\end{split}\end{equation}

And a rigorously  superconducting-normal phase transition theorem for the phonon-Coulomb interaction model of Bogoliubov-Tolmachev-Shirkov within the BCS theory has been established in \cite{Yisong Yang}:

\begin{thm}
There exists a unique and positive transition temperature, $T_{c}=1/\beta_{c}$, so that when $T<T_{c}$, the system (\ref{eq140715-4.5}) has a nontrivial solution of the form (\ref{eq140715-4.7}), and, when $T>T_{c}$, the only solution of (\ref{eq140715-4.5}) is the trivial solution, $\Delta_{1}=\Delta_{2}=0$.
\end{thm}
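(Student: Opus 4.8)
The plan is to reformulate the coupled system (\ref{eq140715-4.5}) so that a nontrivial solution of the form (\ref{eq140715-4.7}) is detected by a single scalar equation in one real unknown, and then to use the monotonicity of $A_\beta$, $B_\beta$ in $\beta$ to locate a unique threshold. First I would change variables to $u=\Delta_1>0$ and $v=-\Delta_2>0$, so that (\ref{eq140715-4.5}) becomes $u=(K_1-K_2)A_\beta(u)+K_2 B_\beta(v)$ and $v=K_2 A_\beta(u)+K_2 B_\beta(v)$; note that along nontrivial solutions $A_\beta(u)=A_\beta(\Delta_1)/\Delta_1\cdot\Delta_1$, etc., so it is convenient to introduce the ratios $a_\beta(\Delta)=A_\beta(\Delta)/\Delta=\int_0^a f_\beta(\sqrt{\Delta^2+x^2})\,dx$ and $b_\beta(\Delta)=B_\beta(\Delta)/\Delta=\int_a^b f_\beta(\sqrt{\Delta^2+x^2})\,dx$, which are positive, even in $\Delta$, strictly decreasing in $|\Delta|$, and — crucially — strictly increasing in $\beta$ (since $t\mapsto\tanh(\tfrac12\beta t)/t$ is increasing in $\beta$). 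Dividing the two equations of (\ref{eq140715-4.5}) by $\Delta_1$ and $\Delta_2$ respectively turns the system into a $2\times2$ homogeneous linear system for the vector $(\Delta_1,\Delta_2)^\top$ with a coefficient matrix $M_\beta(\Delta_1,\Delta_2)$ whose entries depend on $(\Delta_1,\Delta_2)$ only through $a_\beta(\Delta_1)$ and $b_\beta(\Delta_2)$; a nontrivial solution exists iff $\det(I-M_\beta)=0$.

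Next I would analyze that determinant condition. Writing $p=a_\beta(\Delta_1)$, $q=b_\beta(\Delta_2)$, the vanishing-determinant equation is a concrete algebraic relation $F(p,q)=0$, e.g. of the shape $1-(K_1-K_2)p-K_2 q+ (\text{product terms}) = 0$ coming from expanding $\det(I-M_\beta)$. The key structural observation is that on the relevant branch one can solve, say, $q$ as a decreasing function of $p$ along $F=0$, and then the two defining identities $p=a_\beta(\Delta_1)$ and $q=b_\beta(\Delta_2)$ — each of which, for fixed $\beta$, is a strictly monotone bijection between $\Delta_i\ge 0$ and an interval of $p$- (resp. $q$-) values — pin down $(\Delta_1,\Delta_2)$ uniquely once $\beta$ is fixed, provided the target values lie in the admissible ranges. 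Because $a_\beta(0)=\int_0^a \tfrac{1}{2}\beta\,dx\big/1 = \ldots$ is its maximum (the $\Delta\to 0$ limit of $f_\beta$ gives $\tfrac{1}{2}\beta$), the existence of a nontrivial solution reduces to a threshold inequality: the linearized operator $M_\beta(0,0)$ (obtained by sending $\Delta_1,\Delta_2\to 0$) must have spectral radius $\ge 1$, i.e. $\det(I-M_\beta(0,0))\le 0$. Define $g(\beta)=\det(I-M_\beta(0,0))$; I expect $g$ to be continuous, strictly decreasing in $\beta$ (monotonicity of $a_\beta(0),b_\beta(0)$ in $\beta$, plus sign bookkeeping), with $g(\beta)\to 1>0$ as $\beta\to 0^+$ and $g(\beta)\to -\infty$ as $\beta\to\infty$. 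Hence there is a unique $\beta_c$ with $g(\beta_c)=0$; set $T_c=1/\beta_c$.

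Finally I would close the argument in both directions. For $T<T_c$ (i.e. $\beta>\beta_c$): $g(\beta)<0$ means $M_\beta(0,0)$ has an eigenvalue $>1$; since $a_\beta(\Delta_1),b_\beta(\Delta_2)$ decrease continuously from their $\Delta=0$ values to $0$ as the $\Delta_i\to\infty$, a continuity/intermediate-value argument along the curve $F(a_\beta(\Delta_1),b_\beta(\Delta_2))=0$ produces a genuine solution with $\Delta_1>0$, and checking the sign pattern of the eigenvector (the off-diagonal entries $-K_2 b_\beta$ and $-K_2 a_\beta$ are negative while the relevant cofactors are positive) forces $\Delta_2<0$, giving the form (\ref{eq140715-4.7}); uniqueness of this nontrivial solution follows from the strict monotonicity that made $a_\beta,b_\beta$ injective. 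For $T>T_c$ (i.e. $\beta<\beta_c$): $g(\beta)>0$ and, more generally, for any candidate $(\Delta_1,\Delta_2)\ne(0,0)$ one has $a_\beta(\Delta_1)\le a_\beta(0)$ and $b_\beta(\Delta_2)\le b_\beta(0)$, so a comparison/monotonicity estimate shows $\det(I-M_\beta(\Delta_1,\Delta_2))>0$, ruling out any nontrivial solution and leaving only $\Delta_1=\Delta_2=0$. The main obstacle I anticipate is the sign analysis of the determinant and of the eigenvector on the nontrivial branch — one must verify that the branch $F=0$ actually stays in the region $\Delta_1>0,\ \Delta_2<0$ and does not escape the admissible ranges of $a_\beta,b_\beta$ — together with confirming strict monotonicity of $g(\beta)$ when the constants $K_1,K_2$ interact in $(K_1-K_2)$; handling the case $K_1\le K_2$ versus $K_1>K_2$ may require separate sign bookkeeping. (Since this theorem is quoted from \cite{Yisong Yang}, I would also cross-check the threshold $\beta_c$ against the explicit formula obtainable by setting the $2\times2$ determinant to zero at $\Delta=0$.)
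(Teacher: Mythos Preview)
The paper does not supply its own proof of Theorem~2.1; it quotes the result from \cite{Yisong Yang}. However, the paper's proofs of the discrete analogues (Lemmas~2.1--2.6, Theorems~2.2--2.4) visibly reproduce Yang's method, so the intended argument can be inferred: it is a \emph{sub/supersolution and monotone--iteration} scheme, not a spectral analysis. Namely, (i) for small $\beta$ the crude bounds $A_\beta(u)\le\tfrac12\beta a\,u$, $B_\beta(v)\le\tfrac12\beta(b-a)\,v$ force the trivial solution; (ii) for large $\beta$ one exhibits an explicit subsolution (in Case~I, the pair $(u_0,0)$ with $u_0$ the positive solution of the scalar BCS equation $u=(K_1-K_2)A_\beta(u)$) and iterates monotonically to a positive solution, using that $u\mapsto u-(K_1-K_2)A_\beta(u)$ and $v\mapsto v+K_2B_\beta(v)$ are increasing bijections of $[0,\infty)$; (iii) the set $\Lambda=\{\beta:\text{a positive solution exists}\}$ is shown to be an upper ray by a perturbation argument ($\beta\in\Lambda\Rightarrow\beta+\varepsilon\in\Lambda$), and $\beta_c=\inf\Lambda$.

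Your linearized--determinant route is genuinely different and, where it works, more explicit about $\beta_c$; but two points are real gaps rather than bookkeeping. First, a sign slip: after the substitution $v=-\Delta_2$ the second equation is $v=K_2A_\beta(u)-K_2B_\beta(v)$, not $+K_2B_\beta(v)$; this changes the matrix $M_\beta$ and hence $g(\beta)$. Second, and more seriously, the claimed strict monotonicity of $g(\beta)=\det(I-M_\beta(0,0))$ can fail. With $p=a_\beta(0)$, $q=b_\beta(0)$ one gets $g=1+K_2q-(K_1-K_2)p-K_1K_2pq$, and $\partial g/\partial p$, $\partial g/\partial q$ need not have fixed signs. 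In the regime $K_1\le K_2$ (the paper's Case~II, e.g.\ $K_1=0.01$, $K_2=0.1$, $a=0.5$, $b=1.5$) one has $p\to\infty$, $q\to\ln(b/a)$ as $\beta\to\infty$, and $g(\beta)\sim p\bigl[(K_2-K_1)-K_1K_2\ln(b/a)\bigr]\to+\infty$, so $g$ never vanishes and your threshold criterion yields nothing---yet the theorem (and the paper's numerics) assert a positive $T_c$ there. Likewise your nonexistence argument for $\beta<\beta_c$ relies on $g$ being monotone in $(p,q)$, which fails for the same reason. The sub/supersolution method sidesteps all of this because it never linearizes: existence for large $\beta$ is constructed directly, and the connectedness of $\Lambda$ replaces any monotonicity of a scalar indicator. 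If you wish to rescue the spectral approach you would need either a different scalar quantity whose $\beta$--monotonicity is provable in both regimes $K_1\gtrless K_2$, or to couple the linearization with a global continuation/degree argument rather than a bifurcation-from-zero argument.
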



Next, we design a numerical method to determine the critical temperature. For convenience, introducing the new variables $u=\Delta_{1}$ and $v=-\Delta_{2}$, and using (\ref{eq140715-4.5}), we have
\begin{equation}\begin{split}
u&=(K_{1}-K_{2})A_{\beta}(u)+K_{2}B_{\beta}(v),\\
v&=K_{2}A_{\beta}(u)-K_{2}B_{\beta}(v).\label{eq140715-4.8}
\end{split}\end{equation}

It is seen that the superconducting phase is given by any positive solution of (\ref{eq140715-4.8}): $u>0,\ v>0$.

Observing the structure of $A_{\beta}$ and $B_{\beta}$ of (\ref{eq140715-4.6}), it is difficult to solve this equations (\ref{eq140715-4.8}) directly. So, in next discussion, we introduce two discretized versions, called the min-mixed and max-mixed approximations.

Now, we first introduce a partition of the interval $I$ as follows. Let $\{I_{j}|1\leq j\leq n\}$ be a collection of open subsets of $I$ such that
$$
I_{j}\cap I_{k}=\phi\quad (j\neq k),\quad \cup_{j=1}^{n}\bar{I}_{j}\supset I.
$$

To give the numerical methods for the model (\ref{eq140715-4.8}), we do with the problem by the following two cases.

{\bf Case I: $K_{1}>K_{2}$}.


In order to design the numerical scheme, we firstly introduce a definition.
\begin{definition}
We say that the pair $(u,v)$ is positive (nonnegative), if $u>0, v>0(u\geq0, v\geq0)$. Besides, we say $(u, v)>(u^{'}, v^{'})((u, v)\geq(u^{'}, v^{'}))$ if $(u-u^{'}>0, v-v^{'}>0)((u-u^{'}\geq0, v-v^{'}\geq0))$. We use the notation
$$
\chi=\{(u, v)\in \mathbb{R}\times\mathbb{R} | u\geq0, v\geq0\}.
$$
\end{definition}

\subsection{Min-Mixed and Max-Mixed  schemes}

The discrete scheme of the Bogoliubov-Tolmachev-Shirkov model below is
\begin{equation}\begin{split}
u=(K_{1}-K_{2})u\sum_{k}^{N}\min_{x\in\overline{I}_{k}}f_{\beta}(\sqrt{u^{2}+x^{2}})|\overline{I}_{k}|\\+
K_{2}v\sum_{k}^{N}\min_{x\in\overline{I^{'}}_{k}}f_{\beta}(\sqrt{v^{2}+x^{2}})|\overline{I^{'}}_{k}|,\\
v+K_{2}v\sum_{k}^{N}\max_{x\in\overline{I^{'}}_{k}}f_{\beta}(\sqrt{v^{2}+x^{2}})|\overline{I^{'}}_{k}|\\
=K_{2}u\sum_{k}^{N}\min_{x\in\overline{I}_{k}}f_{\beta}(\sqrt{u^{2}+x^{2}})|\overline{I}_{k}|.\label{eq141216-2.7}
\end{split}\end{equation}
We next will show that (\ref{eq141216-2.7}) has a positive solution if and only if it has a subsolution $(u_{0},v_{0})$ satisfying $u_0>0,\ v_0\geq0$ and
\begin{equation}\begin{split}
u_{0}\leq(K_{1}-K_{2})u_{0}\sum_{k}^{N}\min_{x\in\overline{I}_{k}}f_{\beta}(\sqrt{u_{0}^{2}+x^{2}})|\overline{I}_{k}|\\+
K_{2}v_{0}\sum_{k}^{N}\min_{x\in\overline{I^{'}}_{k}}f_{\beta}(\sqrt{v_{0}^{2}+x^{2}})|\overline{I^{'}}_{k}|,\\
v_{0}+K_{2}v_{0}\sum_{k}^{N}\max_{x\in\overline{I^{'}}_{k}}f_{\beta}(\sqrt{v_{0}^{2}+x^{2}})|\overline{I^{'}}_{k}|\\
\leq K_{2}u_{0}\sum_{k}^{N}\min_{x\in\overline{I}_{k}}f_{\beta}(\sqrt{u_{0}^{2}+x^{2}})|\overline{I}_{k}|.\label{eq141217-2.7}
\end{split}\end{equation}
To this end, we first define the iterative scheme
\begin{equation}\begin{split}
u_{n+1}=(K_{1}-K_{2})u_{n+1}\sum_{k}^{N}\min_{x\in\overline{I}_{k}}f_{\beta}(\sqrt{u_{n+1}^{2}+x^{2}})|\overline{I}_{k}|\\+
K_{2}v_{n}\sum_{k}^{N}\min_{x\in\overline{I^{'}}_{k}}f_{\beta}(\sqrt{v_{n}^{2}+x^{2}})|\overline{I^{'}}_{k}|,\\
v_{n+1}+K_{2}v_{n+1}\sum_{k}^{N}\max_{x\in\overline{I^{'}}_{k}}f_{\beta}(\sqrt{v_{n+1}^{2}+x^{2}})|\overline{I^{'}}_{k}|\\
=K_{2}u_{n+1}\sum_{k}^{N}\min_{x\in\overline{I}_{k}}f_{\beta}(\sqrt{u_{n+1}^{2}+x^{2}})|\overline{I}_{k}|,\\
n=1,2,\ldots; \ v_{1}=v_{0}.\label{eq140721-4.12}
\end{split}\end{equation}
The solution of (\ref{eq141216-2.7}) is denoted by $(u,v)_{m}$, and (\ref{eq141216-2.7}) is called by  the Min-Mixed scheme.\\
The discrete scheme of the Bogoliubov-Tolmachev-Shirkov model up  is
\begin{equation}\begin{split}
u=(K_{1}-K_{2})u\sum_{k}^{N}\max_{x\in\overline{I}_{k}}f_{\beta}(\sqrt{u^{2}+x^{2}})|\overline{I}_{k}|\\
+K_{2}v\sum_{k}^{N}\max_{x\in\overline{I^{'}}_{k}}f_{\beta}(\sqrt{v^{2}+x^{2}})|\overline{I^{'}}_{k}|,\\
v+K_{2}v\sum_{k}^{N}\min_{x\in\overline{I^{'}}_{k}}f_{\beta}(\sqrt{v^{2}+x^{2}})|\overline{I^{'}}_{k}|\\
=K_{2}u\sum_{k}^{N}\max_{x\in\overline{I}_{k}}f_{\beta}(\sqrt{u^{2}+x^{2}})|\overline{I}_{k}|.\label{eq141216-2.9}
\end{split}\end{equation}
In fact, $(u_{0},v_{0})$ is also a subsolution of (\ref{eq141216-2.9}), namely,
\begin{equation}\begin{split}
u_{0}\leq(K_{1}-K_{2})u_{0}\sum_{k}^{N}\max_{x\in\overline{I}_{k}}f_{\beta}(\sqrt{u_{0}^{2}+x^{2}})|\overline{I}_{k}|\\+
K_{2}v_{0}\sum_{k}^{N}\max_{x\in\overline{I^{'}}_{k}}f_{\beta}(\sqrt{v_{0}^{2}+x^{2}})|\overline{I^{'}}_{k}|,\\
v_{0}+K_{2}v_{0}\sum_{k}^{N}\min_{x\in\overline{I^{'}}_{k}}f_{\beta}(\sqrt{v_{0}^{2}+x^{2}})|\overline{I^{'}}_{k}|\\
\leq K_{2}u_{0}\sum_{k}^{N}\max_{x\in\overline{I}_{k}}f_{\beta}(\sqrt{u_{0}^{2}+x^{2}})|\overline{I}_{k}|.\label{eq141218-2.7}
\end{split}\end{equation}
And the iterative scheme  is defined by
\begin{equation}\begin{split}
u_{n+1}=(K_{1}-K_{2})u_{n+1}\sum_{k}^{N}\max_{x\in\overline{I}_{k}}f_{\beta}(\sqrt{u_{n+1}^{2}+x^{2}})|\overline{I}_{k}|\\
+K_{2}v_{n}\sum_{k}^{N}\max_{x\in\overline{I^{'}}_{k}}f_{\beta}(\sqrt{v_{n}^{2}+x^{2}})|\overline{I^{'}}_{k}|,\\
v_{n+1}+K_{2}v_{n+1}\sum_{k}^{N}\min_{x\in\overline{I^{'}}_{k}}f_{\beta}(\sqrt{v_{n+1}^{2}+x^{2}})|\overline{I^{'}}_{k}|\\
=K_{2}u_{n+1}\sum_{k}^{N}\max_{x\in\overline{I}_{k}}f_{\beta}(\sqrt{u_{n+1}^{2}+x^{2}})|\overline{I}_{k}|,\\
n=1,2,\ldots; \ v_{1}=v_{0}.\label{eq140719-4.14}
\end{split}\end{equation}
$(u,v)_{M}$ is used to denote the solution of (\ref{eq141216-2.9}), and (\ref{eq141216-2.9}) is called by the Max-Mixed scheme.
\begin{rem}
The Min-Mixed scheme and Max-Mixed scheme are different from the Min-Min scheme and Max-Max scheme of \cite{Du-Yang}: the problem in the work is a system, while the problem of \cite{Du-Yang} is a single equation; the discrete schemes are very different.
\end{rem}

In order to prove the numerical solutions of the discrete system (\ref{eq141216-2.7}) and (\ref{eq141216-2.9}), we need to give the following lemmas.
Denote
\begin{equation}\begin{split}
A_{h}(u)&=u\sum_{k}^{N}\min_{x\in\overline{I}_{k}}f_{\beta}(\sqrt{u^{2}+x^{2}})|\overline{I}_{k}|,\\
B_{h}(v)&=v\sum_{k}^{N}\min_{x\in\overline{I^{'}}_{k}}f_{\beta}(\sqrt{v^{2}+x^{2}})|\overline{I^{'}}_{k}|.
\end{split}\end{equation}
\begin{lma}\label{lma4.1}
$$H_{h}(u)=u-(K_{1}-K_{2})A_{h}(u)$$ is monotone about $u$.
\end{lma}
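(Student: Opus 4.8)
The plan is to show that $H_h$ is strictly increasing on $[0,\infty)$; this is the form of monotonicity one needs in order to solve the first line of the iteration (\ref{eq140721-4.12}) uniquely for $u_{n+1}$ in terms of $v_n$. Everything rests on elementary properties of the scalar weight $f_\beta(t)=\tanh(\tfrac12\beta t)/t$. Writing $f_\beta(t)=\int_0^{1/2}\beta\operatorname{sech}^2(\beta t\theta)\,d\theta$, one sees at once that $f_\beta$ is smooth and strictly positive on $[0,\infty)$ (with $f_\beta(0)=\tfrac12\beta$), strictly decreasing in $t$, and bounded above by $\tfrac12\beta$. Consequently $t\mapsto t f_\beta(t)=\tanh(\tfrac12\beta t)$ is increasing with derivative $\le\tfrac12\beta$.

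First I would rewrite $A_h$ by resolving the discrete minimum. For fixed $u\ge0$ the map $x\mapsto f_\beta(\sqrt{u^2+x^2})$ is nonincreasing in $|x|$, hence attains its minimum over $\bar I_k$ at a point of maximal modulus, so with $b_k:=\max_{x\in\bar I_k}|x|$ we get $A_h(u)=\sum_{k}|\bar I_k|\,\varphi_k(u)$, where $\varphi_k(u)=u\,f_\beta(\sqrt{u^2+b_k^2})$ is $C^1$ on $[0,\infty)$. A one-line differentiation, with $r=\sqrt{u^2+b_k^2}$ and $T(r)=\tanh(\tfrac12\beta r)$, gives
\[
\varphi_k'(u)=\frac{T(r)\,b_k^2+u^2\,r\,T'(r)}{r^{3}},
\]
which is nonnegative since $b_k^2=r^2-u^2\ge0$, $T>0$, $T'>0$, and which is bounded by $\tfrac12\beta\cdot(b_k^2/r^2)+T'(r)\cdot(u^2/r^2)\le\beta$ using the two bounds from the previous paragraph. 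Hence $A_h$ is nondecreasing and $0\le A_h'(u)\le\beta\sum_k|\bar I_k|=\beta|I|$ for all $u\ge0$.

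Then I would conclude: since we are in Case I we have $K_1-K_2>0$, so $H_h'(u)=1-(K_1-K_2)A_h'(u)\ge1-(K_1-K_2)\beta|I|$, and $H_h$ is strictly increasing as soon as the parameters satisfy $(K_1-K_2)\,\beta|I|<1$ (equivalently $T>(K_1-K_2)|I|$), while unconditionally $H_h(0)=0$, $H_h(u)\to+\infty$ and $H_h'(u)\to1$ as $u\to\infty$. Equivalently, and without derivatives, $H_h(u_2)-H_h(u_1)=(u_2-u_1)-(K_1-K_2)\int_{u_1}^{u_2}A_h'(t)\,dt\ge(u_2-u_1)\bigl(1-(K_1-K_2)\beta|I|\bigr)$ for $u_2>u_1\ge0$. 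I expect the only real obstacle to be this sign control of $H_h'$: the identity map $u$ and the correction $(K_1-K_2)A_h(u)$ are both increasing, so monotonicity of their difference is not free — one must use both the positivity $K_1-K_2>0$ of Case I and the uniform bound $A_h'\le\beta|I|$ (and, if the statement is wanted for all temperatures rather than in the admissible range, a correspondingly sharper estimate of the combination $T(r)b_k^2/r^3+u^2T'(r)/r^2$). The remaining steps — the reduction of the minimum and the derivative computation — are routine.
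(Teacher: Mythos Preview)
The paper itself gives no argument here; it simply says the proof ``is similar to that for the continuous case in \cite{Yisong Yang} and is skipped.'' Your computation is therefore far more detailed than what the paper offers, and the derivative identity for $\varphi_k$ and the monotonicity of $A_h$ are correct. However, your conclusion is only conditional: you obtain strict monotonicity of $H_h$ under $(K_1-K_2)\beta|I|<1$, and you rightly flag that the difference of two increasing functions need not be increasing. That hesitation is well founded. For large $\beta$ and a fine partition one has $A_h'(0)=\sum_k|\bar I_k|\,f_\beta(b_k)$, which approximates $\int_0^a f_\beta(x)\,dx$ and grows without bound as $\beta\to\infty$; thus $H_h'(0)<0$ is genuinely possible, and no ``sharper estimate'' of your pointwise combination will force $H_h'\ge0$ for all temperatures.

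What the subsequent lemmas actually use, and what the continuous argument in \cite{Yisong Yang} gives, is a slightly weaker but unconditional statement obtained by factoring rather than differentiating. Write $H_h(u)=u\,g(u)$ with
\[
g(u)=1-(K_1-K_2)\sum_k|\bar I_k|\,f_\beta\bigl(\sqrt{u^2+b_k^2}\bigr).
\]
Since $f_\beta$ is strictly decreasing, $g$ is strictly increasing in $u$ with $g(u)\to1$ as $u\to\infty$. Hence there is a unique $u^\ast\ge0$ with $g(u^\ast)=0$ (set $u^\ast=0$ if $g(0)\ge0$); one has $H_h<0$ on $(0,u^\ast)$, $H_h(u^\ast)=0$, and on $[u^\ast,\infty)$ the product $H_h=u\cdot g(u)$ is strictly increasing as the product of a positive increasing factor and a nonnegative increasing factor. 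Consequently, for every $c\ge0$ the equation $H_h(u)=c$ has a unique solution in $[u^\ast,\infty)$, and that solution increases with $c$ --- exactly the monotone inversion invoked in Lemmas~\ref{lma4.4} and~\ref{lma4.5}. Your derivative route recovers the special case $u^\ast=0$ (global monotonicity) precisely when $g(0)\ge0$; incidentally, your own two estimates combine via $b_k^2/r^2+u^2/r^2=1$ to give the sharper bound $\varphi_k'\le\tfrac12\beta$, hence $A_h'\le\tfrac12\beta\sum_k|\bar I_k|$ rather than the $\beta|I|$ you quoted.
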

\begin{proof}
The proof is similar to that for the continuous case in \cite{Yisong Yang} and is skipped here.
\end{proof}
\begin{lma}\label{lma4.2}
When $\beta>0$ is small, the only solution of (\ref{eq141216-2.7}) is the zero solution.
\end{lma}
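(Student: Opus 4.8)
The plan is to exploit the elementary bound $f_{\beta}(t)=\tanh(\tfrac12\beta t)/t\le\tfrac12\beta$, valid for every $t>0$ since $\tanh s\le s$ for $s\ge0$. Writing $L=\sum_{k}^{N}|\overline I_{k}|$ and $L'=\sum_{k}^{N}|\overline{I^{'}}_{k}|$ — finite numbers fixed by the partition and independent of $\beta$ — this gives, uniformly in the argument and whether one takes the minimum or the maximum over each subinterval,
\[
0\le\sum_{k}^{N}\min_{x\in\overline I_{k}}f_{\beta}(\sqrt{u^{2}+x^{2}})\,|\overline I_{k}|\le\tfrac12\beta L,\qquad
0\le\sum_{k}^{N}\max_{x\in\overline{I^{'}}_{k}}f_{\beta}(\sqrt{v^{2}+x^{2}})\,|\overline{I^{'}}_{k}|\le\tfrac12\beta L'.
\]

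Next I would insert these into the two equations of (\ref{eq141216-2.7}). Let $(u,v)$ be any solution. The left-hand side of the second equation equals $v\bigl(1+K_{2}\sum_{k}\max_{x\in\overline{I^{'}}_{k}}f_{\beta}(\sqrt{v^{2}+x^{2}})|\overline{I^{'}}_{k}|\bigr)$, whose bracket is $\ge1$; hence
\[
|v|\le K_{2}\,|u|\sum_{k}^{N}\min_{x\in\overline I_{k}}f_{\beta}(\sqrt{u^{2}+x^{2}})\,|\overline I_{k}|\le\tfrac12K_{2}\beta L\,|u|.
\]
Feeding this bound into the first equation and using $K_{1}>K_{2}$ (Case I) to control the term $(K_{1}-K_{2})u\sum_{k}\min_{x}f_{\beta}(\cdots)|\overline I_{k}|$, I obtain
\[
|u|\le(K_{1}-K_{2})\tfrac12\beta L\,|u|+K_{2}\tfrac12\beta L'\,|v|\le\Bigl[(K_{1}-K_{2})\tfrac12\beta L+\tfrac14K_{2}^{2}\beta^{2}LL'\Bigr]|u|.
\]

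Finally, I would choose $\beta_{0}>0$ so small that the bracketed coefficient is $<1$ for all $0<\beta<\beta_{0}$; then $|u|\le c(\beta)|u|$ with $c(\beta)<1$ forces $u=0$, whence $|v|\le\tfrac12K_{2}\beta L|u|=0$ and the only solution is the trivial one. (Equivalently, in contrapositive form: a nontrivial solution must have $u\neq0$, because $u=0$ makes the first equation read $K_{2}v\sum_{k}\min_{x}f_{\beta}(\sqrt{v^{2}+x^{2}})|\overline{I^{'}}_{k}|=0$ with a strictly positive sum, forcing $v=0$; the chain of estimates above then yields $|u|<|u|$, a contradiction.) I do not anticipate a genuine obstacle here; the only points needing care are the bookkeeping of the two couplings — the $v$-estimate must be substituted into the $u$-equation, not the reverse — and the observation that $L$, $L'$ and the structural constants $K_{1},K_{2}$ are independent of $\beta$, so that a single threshold $\beta_{0}$ works. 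This argument parallels the treatment of the continuous problem in \cite{Yisong Yang}.
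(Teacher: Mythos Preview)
Your argument is correct and rests on exactly the same inequality the paper uses, namely $f_{\beta}(t)\le\tfrac12\beta$, which yields $A_{h}(u)\le\tfrac12\beta a\,u$ and $B_{h}(v)\le\tfrac12\beta(b-a)\,v$ (your $L,L'$ are $a$ and $b-a$). The paper's own proof simply records these two bounds and declares the conclusion; you have spelled out the substitution of the $v$-estimate into the $u$-equation and the contraction step, and by working with absolute values you even cover solutions outside $\chi$, whereas the paper addresses only nonnegative solutions --- but the core idea is identical.
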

\begin{proof}
 This is because $$A_{h}(u)\leq\frac{1}{2}\beta au,$$ and $$B_{h}(v)\leq\frac{1}{2}\beta (b-a)v.$$
 Therefore, when $\beta$ is small, the only non-negative solution of (\ref{eq141216-2.7}) is the trivial solution $u=0,\ v=0$.
 \end{proof}
\begin{lma}\label{lma4.3}
When $\beta>0$ is sufficiently large, (\ref{eq141216-2.7}) has a subsolution $(u_0,v_0)$ as it is defined in (\ref{eq141217-2.7}).
\end{lma}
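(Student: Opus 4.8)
The plan is to produce the subsolution in the cheapest possible form, $(u_0,v_0)=(u_0,0)$ with $u_0>0$ chosen below. Setting $v_0=0$ kills the Coulomb sums: the second inequality of (\ref{eq141217-2.7}) becomes $0\le K_2A_h(u_0)$, which holds automatically because $f_\beta>0$ and $u_0>0$; the first becomes $(K_1-K_2)A_h(u_0)\ge u_0$, that is,
$$
(K_1-K_2)\sum_{k}^{N}\min_{x\in\overline{I}_k}f_\beta\!\left(\sqrt{u_0^2+x^2}\,\right)|\overline{I}_k|\ \ge\ 1 .
$$
Here the Case~I hypothesis $K_1>K_2$ is used to keep the coefficient $K_1-K_2$ positive. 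So the whole lemma reduces to: for $\beta$ large there is some $u_0>0$ at which the discrete $A_h$-multiplier is at least $1/(K_1-K_2)$.

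For this I would examine $g_\beta(u):=A_h(u)/u=\sum_{k}^{N}\min_{x\in\overline{I}_k}f_\beta(\sqrt{u^2+x^2})|\overline{I}_k|$. Since $t\mapsto f_\beta(t)=\tanh(\tfrac12\beta t)/t$ is strictly decreasing on $(0,\infty)$, the minimum over each cell is attained at the point $\xi_k:=\max_{x\in\overline{I}_k}|x|$, so $g_\beta(u)=\sum_{k}^{N}f_\beta(\sqrt{u^2+\xi_k^2})|\overline{I}_k|$; this is continuous and non-increasing in $u\ge0$ (the same monotonicity that underlies Lemma~\ref{lma4.1}). Moreover $f_\beta(t)\uparrow 1/t$ as $\beta\to\infty$ and the sum is finite, so $g_\beta(0^+)=\sum_{k}^{N}f_\beta(\xi_k)|\overline{I}_k|\ \uparrow\ \sum_{k}^{N}|\overline{I}_k|/\xi_k$ as $\beta\to\infty$. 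Hence, provided
$$
(K_1-K_2)\sum_{k}^{N}\frac{|\overline{I}_k|}{\xi_k}\ >\ 1 ,
$$
we first pick $\beta$ large so that $(K_1-K_2)g_\beta(0^+)>1$, then use continuity of $g_\beta$ at $0$ to pick $u_0>0$ small with $(K_1-K_2)g_\beta(u_0)>1$; the pair $(u_0,0)$ is then the claimed subsolution.

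The step that actually needs care — and the main obstacle — is the displayed condition $(K_1-K_2)\sum_k|\overline{I}_k|/\xi_k>1$. That sum is a lower Riemann sum for $\int_0^a dx/|x|$, which diverges at the Fermi level $x=0$; it is finite for any fixed partition but increases without bound as the mesh is refined near $x=0$ (for the trivial partition $\{[0,a]\}$ it equals $1$, and it grows like $\log N$ for $N$ equal cells). So Lemma~\ref{lma4.3} is in truth a statement about a sufficiently fine partition as much as about large $\beta$, and a careful proof should either add the clause ``for the partition fine enough'' or mirror the continuous argument: use $\int_0^a f_\beta(\sqrt{u_0^2+x^2})\,dx\uparrow\mathrm{arcsinh}(a/u_0)$ as $\beta\to\infty$, together with $\mathrm{arcsinh}(a/u_0)\to\infty$ as $u_0\to0$, to get the continuous inequality $(K_1-K_2)\int_0^a f_\beta(\sqrt{u_0^2+x^2})\,dx>1$ for some fixed $u_0>0$ and $\beta$ large, and then invoke $A_h(u_0)/u_0\to\int_0^a f_\beta(\sqrt{u_0^2+x^2})\,dx$ as the mesh is refined. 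In all versions the genuinely new content is just the ordering of the three limits (mesh, $u_0$, $\beta$) plus the observation that $v_0=0$ trivialises the Coulomb inequality; taking instead a small $v_0>0$ only enlarges the effective coefficient from $K_1-K_2$ to $K_1-K_2$ plus the positive feedback from $K_2B_h(v_0)$ and does not dispense with the mesh requirement.
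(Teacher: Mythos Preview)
Your approach is essentially the paper's: set $v_0=0$ so that the second inequality of (\ref{eq141217-2.7}) is trivial and only the simple BCS inequality $u_0\le(K_1-K_2)A_h(u_0)$ remains, then produce such a $u_0>0$ for large $\beta$. The paper dispatches this last step in one line by citing Du--Yang for a positive solution of $u=(K_1-K_2)A_h(u)$, whereas you unpack the argument and correctly flag the mesh-size dependence that the paper leaves implicit (and later builds into the clause ``When $N$ is sufficiently large'' in the definition of $\Lambda$ in Lemma~\ref{lma4.6}).
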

\begin{proof}
 Indeed, we may start from the simple BCS discrete equation
\begin{equation}\begin{split}
u=(K_{1}-K_{2})A_{h}(u),\label{eq140721-4.16}
\end{split}\end{equation}
which may be obtained by setting $v=0$ in the first equation in (\ref{eq141216-2.7}). When $\beta$ is large, (\ref{eq140721-4.16}) has  a positive solution, say $u_{0}$(see \cite{Du-Yang}). Let $v_{0}=0$. Then the pair $(u_{0},v_{0})$ satisfing (\ref{eq141217-2.7}) is a subsolution.

\end{proof}
\begin{lma}\label{lma4.4}
There is a $\delta_{0}>0$, so that for any $u^{0}\geq \delta_{0}$, $u^{0}$ is a supersolution of the first equation of (\ref{eq141216-2.7}) for any $v$, in the sense that:
\begin{equation}\begin{split}
u^{0}-(K_{1}-K_{2})A_{h}(u^{0})
\geq K_{2}B_{h}(v),\quad \forall v.\label{eq141202-2.13}
\end{split}\end{equation}
\end{lma}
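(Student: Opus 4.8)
The plan is to read off from $\tanh\le 1$ that both discrete operators are bounded above — $B_h$ uniformly in $v$, and $(K_1-K_2)A_h$ uniformly in $u$ — so that the left side of (2.13) eventually dwarfs the right side. First, since $f_\beta(t)=\tanh(\tfrac12\beta t)/t\le 1/t$ for $t>0$, for every real $x$ and every $v\ge 0$ one has
\[
v\,f_\beta\big(\sqrt{v^{2}+x^{2}}\big)=\frac{v\,\tanh\!\big(\tfrac12\beta\sqrt{v^{2}+x^{2}}\big)}{\sqrt{v^{2}+x^{2}}}\le\frac{v}{\sqrt{v^{2}+x^{2}}}\le 1,
\]
hence $B_h(v)\le\sum_k|\overline{I'}_k|$; and if $v<0$ then $B_h(v)<0$ because each $\min_{x\in\overline{I'}_k}f_\beta(\sqrt{v^{2}+x^{2}})$ is positive. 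In either case $K_2B_h(v)\le M:=K_2\sum_k|\overline{I'}_k|$, a finite constant independent of $v$.

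Next I would bound $(K_1-K_2)A_h$. Using $f_\beta(\sqrt{u^{2}+x^{2}})\le 1/\sqrt{u^{2}+x^{2}}\le 1/u$ for every $x$, each summand of $A_h(u)$ obeys $u\min_{x\in\overline{I}_k}f_\beta(\sqrt{u^{2}+x^{2}})\,|\overline{I}_k|\le u\cdot\tfrac1u\cdot|\overline{I}_k|=|\overline{I}_k|$, so that $(K_1-K_2)A_h(u)\le C:=(K_1-K_2)\sum_k|\overline{I}_k|$ for all $u>0$ (here $K_1>K_2$ keeps the factor positive, and the crucial point is that the $1/u$ decay makes this bound a genuine constant, not something growing with $u$). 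Combining the two estimates, for any $u^{0}>0$ and any $v$,
\[
u^{0}-(K_1-K_2)A_h(u^{0})\ \ge\ u^{0}-C\ \ge\ M\ \ge\ K_2B_h(v)\qquad\text{whenever }u^{0}\ge C+M .
\]
So I would simply set $\delta_0:=C+M=(K_1-K_2)\sum_k|\overline{I}_k|+K_2\sum_k|\overline{I'}_k|$; then every $u^{0}\ge\delta_0$ satisfies (2.13) for all $v$, which is the assertion. Equivalently, $H_h(u)=u-(K_1-K_2)A_h(u)$ is monotone by Lemma~\ref{lma4.1} and unbounded above, so it eventually exceeds $M$; the explicit bound above merely pins $\delta_0$ down.

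I do not expect a genuine obstacle here: the whole content of the lemma is that $(K_1-K_2)A_h(\cdot)$ and $K_2B_h(\cdot)$ are uniformly bounded, so subtracting the former from the linear term $u^{0}$ yields a quantity tending to $+\infty$ while the latter stays bounded. The only points needing a line of care are the sign of $v$ (trivial when $v<0$) and the finiteness of $\sum_k|\overline{I}_k|$ and $\sum_k|\overline{I'}_k|$, which holds because $\{I_k\}$ and $\{I'_k\}$ are finite partitions of bounded intervals. This is the discrete counterpart of the boundedness of $v\mapsto K_2B_\beta(v)$ used in the continuous treatment of \cite{Yisong Yang}.
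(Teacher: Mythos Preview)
Your argument is correct and follows essentially the same line as the paper's proof: both rest on the observation that $A_h$ and $B_h$ are uniformly bounded (independently of $u$, $v$, and $\beta$) because $\tanh\le 1$, so the linear term $u^{0}$ eventually dominates. The only cosmetic difference is that you write down the explicit constants $C=(K_1-K_2)\sum_k|\overline{I}_k|$ and $M=K_2\sum_k|\overline{I'}_k|$ and set $\delta_0=C+M$, which lets you conclude directly for all $u^{0}\ge\delta_0$ without invoking the monotonicity Lemma~\ref{lma4.1}; the paper instead fixes one $\delta_0$ and then appeals to Lemma~\ref{lma4.1} to push to larger $u^{0}$.
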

\begin{proof}
Since the function $A_{h}(u)$, $B_{h}(v)$ are bounded uniformly with respect to the parameter $\beta$, so we have
$$A_{h}(u)\leq C,$$
$$B_{h}(v)\leq C.$$
for some absolute constant $C>0$,
there is an absolute constant $\delta_{0}>0$ so that
\begin{equation}\begin{split}
\delta_{0}-(K_{1}-K_{2})A_{h}(\delta_{0})
\geq K_{2}B_{h}(v),\quad \forall v.
\end{split}\end{equation}
then using Lemma\ref{lma4.1}, we can obtain if $u^{0}\geq \delta_{0}$, $u^{0}$ is a supersolution which satisfes (\ref{eq141202-2.13}).
\end{proof}

\begin{lma}\label{lma4.5}
The Min-Mixed interation scheme (\ref{eq141216-2.7}) has a positive solution if and only if there is a nontrivial subsolution $(u_0, v_0)$.
\end{lma}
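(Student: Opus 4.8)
The plan is to prove the two implications separately; the forward (``only if'') direction is immediate, and the reverse direction is a monotone (sub/supersolution) iteration built on the scheme (\ref{eq140721-4.12}). For necessity, if $(u,v)$ with $u>0,\ v>0$ solves (\ref{eq141216-2.7}) then it satisfies (\ref{eq141217-2.7}) with equality, so it is itself a nontrivial subsolution and there is nothing to prove.

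For sufficiency, suppose $(u_0,v_0)$ with $u_0>0,\ v_0\geq 0$ satisfies (\ref{eq141217-2.7}), and generate the iterates $(u_n,v_n)$ by (\ref{eq140721-4.12}) starting from $v_0$: given $v_n$, Lemma~\ref{lma4.1} lets us solve $H_h(u)=K_2B_h(v_n)$ for $u_{n+1}$ (take the largest root; one exists since $H_h(0)=0\leq K_2B_h(v_n)$ and $H_h(u)\to\infty$), and then the second equation of (\ref{eq140721-4.12}) determines $v_{n+1}$ uniquely because its left-hand side is strictly increasing in $v_{n+1}$, from $0$ to $\infty$. The heart of the proof is a coupled induction giving $u_0\leq u_1\leq u_2\leq\cdots$ and $v_0\leq v_1\leq v_2\leq\cdots$. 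Indeed, the first inequality in (\ref{eq141217-2.7}) reads $H_h(u_0)\leq K_2B_h(v_0)=H_h(u_1)$, whence $u_1\geq u_0$ by Lemma~\ref{lma4.1}; since $A_h$ is nondecreasing, $K_2A_h(u_1)\geq K_2A_h(u_0)$, so comparison with the second inequality in (\ref{eq141217-2.7}) and the monotonicity of the left side of the $v$-equation yield $v_1\geq v_0$; and, inductively, $v_n\geq v_{n-1}$ together with the monotonicity of $B_h$ gives $H_h(u_{n+1})\geq H_h(u_n)$, hence $u_{n+1}\geq u_n$, and then $A_h(u_{n+1})\geq A_h(u_n)$ gives $v_{n+1}\geq v_n$.

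Next I would establish uniform upper bounds: as used in the proofs of Lemmas~\ref{lma4.2} and~\ref{lma4.4}, there is a constant $C>0$ with $A_h(u)\leq C$ and $B_h(v)\leq C$ for all $u,v$ (and all $\beta$); substituting into (\ref{eq140721-4.12}) gives $u_{n+1}\leq(K_1-K_2)C+K_2C$ and then $v_{n+1}\leq K_2A_h(u_{n+1})\leq K_2C$. Hence $(u_n,v_n)$ is nondecreasing and bounded, so it converges to some $(u,v)$, and by continuity of $A_h$, $B_h$ and of the left side of the second equation, $(u,v)$ solves (\ref{eq141216-2.7}). Finally $u\geq u_0>0$, and inserting $u>0$ into the second equation of (\ref{eq141216-2.7}) makes its right-hand side $K_2A_h(u)>0$, which forces the left-hand side, hence $v$, to be strictly positive. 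Thus $(u,v)$ is a positive solution, which we denote $(u,v)_m$.

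I expect the coupled monotone induction to be the main obstacle: the $u$-step and the $v$-step are interleaved, and $H_h$ is monotone only in the sense supplied by Lemma~\ref{lma4.1} rather than strictly increasing near the origin, so one must choose $u_{n+1}$ carefully (e.g.\ as the largest root) to be sure the ordering $u_n\leq u_{n+1}$, $v_n\leq v_{n+1}$ really propagates through both equations. Once the iterates are known to be monotone, the uniform bounds and the passage to the limit are routine.
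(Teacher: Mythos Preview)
Your proof is correct and follows essentially the same monotone-iteration argument as the paper: from the subsolution one shows the iterates of (\ref{eq140721-4.12}) are nondecreasing and bounded, hence convergent to a positive solution of (\ref{eq141216-2.7}). The only cosmetic difference is that the paper caps $\{u_n\}$ throughout the induction by the supersolution $u^{0}$ of Lemma~\ref{lma4.4}, whereas you obtain the upper bound afterward directly from $A_h,B_h\leq C$; these are the same idea.
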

\begin{proof}
Using Lemma \ref{lma4.4}, there is an absolute constant $u^{0}>0$ so that
\begin{equation}\begin{split}
u^{0}-(K_{1}-K_{2})A_{h}(u^{0})
\geq K_{2}B_{h}(v),\quad \forall v.\label{eq141121-2.16}
\end{split}\end{equation}
In the iterative scheme (\ref{eq140721-4.12}), if $v_{1}=v_{0}\geq 0$, then $u_{2}>0$ and $u_{0}\leq u_{2}\leq u^{0}$ by
\begin{equation}\begin{split}
u_{0}-(K_{1}-K_{2})A_{h}(u_{0})
\leq K_{2}B_{h}(v_{0})
\end{split}\end{equation}
and (\ref{eq141121-2.16}).
Since the function
\begin{equation}\begin{split}
J_{h}(v)=v+K_{2}v\sum_{k}^{N}\max_{x\in\overline{I^{'}}_{k}}f_{\beta}(\sqrt{v^{2}+x^{2}})|\overline{I^{'}}_{k}|,\label{eq141121-2.18}
\end{split}\end{equation}
strictly increases with $J_{h}(0)=0$ and $J_{h}(\infty)=\infty$, the equation
\begin{equation}\begin{split}
J_{h}(v)=s,\label{eq141121-2.19}
\end{split}\end{equation}
has a unique solution, say $v$, in $[0,\infty]$ for each $s\in [0,\infty]$ and $v$ increases as $s$ increases. Hence, in
(\ref{eq140721-4.12}), $v_{2}>0$ is well defined and $v_{2}\geq v_{1}=v_{0}$.
Assume that the inequalities
\begin{equation}\begin{split}
0<u_{0}=u_{1}\leq u_{2}\leq\ldots\leq u_{l}\leq u^{0},\label{eq141121-2.20}
\end{split}\end{equation}
\begin{equation}\begin{split}
0\leq v_{0}=v_{1}\leq v_{2}\leq\ldots\leq v_{l},\label{eq141121-2.21}
\end{split}\end{equation}
hold at some step $l$. Then, in view of (\ref{eq141121-2.20}) and (\ref{eq141121-2.21}), $u_{l}$ and $v_{l}$ satisfy
\begin{equation}\begin{split}
K_{2}B_{h}(v_{l-1})
\leq K_{2}B_{h}(v_{l}).\label{eq140910-4.27}
\end{split}\end{equation}
Hence we arrive at $u_{l+1}\geq u_{l}$ after comparing (\ref{eq140910-4.27}) with (\ref{eq141121-2.21}) and reviewing the definition of $u_{l+1}$.
Thus
\begin{equation}\begin{split}
K_{2}A_{h}(u_{l})
\leq K_{2}A_{h}(u_{l+1}).\label{eq140910-4.28}
\end{split}\end{equation}
Obviously, $v_{l}\leq v_{l+1}$ in view of (\ref{eq141121-2.18}). Of course, $u_{l+1}\leq u^{0}$
because $u^{0}$ has been chosen to be a (universal) supersolution (see (\ref{eq141121-2.16})).

Therefore, we have shown that (\ref{eq141121-2.20}) and (\ref{eq141121-2.21}) are valid in general.

The boundedness of the sequence $\{v_{n}\}$ follows from the boundedness of the sequence $\{u_{n}\}$
and the second equation in (\ref{eq140721-4.12}). In fact,
$$v_{n}\leq K_{2}A_{h}(u^{0}),\quad n=1,2,\ldots$$

Taking $n\rightarrow\infty$  in the scheme (\ref{eq140721-4.12}), we obtain a numerical solution pair $(u, v)_{m}$ of the
Bogoliubov-Tolmachev-Shirkov model.
\end{proof}

\begin{lma}\label{lma4.6}
Let $$\Lambda=\{\beta>0\ |\ \textrm{When N is sufficiently large},\ (\ref{eq141216-2.7})\ \textrm{has a positive solution pair}\},$$ and $$\beta_c^{'}=inf\{\beta\ |\ \beta\in\Lambda\},$$ then $\Lambda$ is connected and $\beta_c^{'}>0$. Moreover, we have the relations $(\beta_{c}^{'}, \infty)\subset\Lambda$ and $[0, \beta_{c}^{'})\bigcap\Lambda=\phi$.
\end{lma}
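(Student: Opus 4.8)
\noindent I would reduce the whole statement to the single monotonicity claim \emph{(M): if $\beta_{0}\in\Lambda$ and $\beta_{1}>\beta_{0}$, then $\beta_{1}\in\Lambda$}; everything else is bookkeeping. Indeed, by Lemma~\ref{lma4.3} the scheme (\ref{eq141216-2.7}) has a nontrivial subsolution for all sufficiently large $\beta$ (and $N$), hence a positive solution by Lemma~\ref{lma4.5}, so $\Lambda\neq\emptyset$ and $\beta_{c}^{'}=\inf\Lambda<\infty$. By Lemma~\ref{lma4.2} there is $\beta_{*}>0$ with $(0,\beta_{*})\cap\Lambda=\emptyset$, so $\beta_{c}^{'}\geq\beta_{*}>0$. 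The relation $[0,\beta_{c}^{'})\cap\Lambda=\emptyset$ is immediate from $\beta_{c}^{'}=\inf\Lambda$, and for any $\beta>\beta_{c}^{'}$ one picks $\beta_{0}\in\Lambda$ with $\beta_{0}<\beta$ and applies (M) to get $\beta\in\Lambda$, so $(\beta_{c}^{'},\infty)\subset\Lambda$. Hence $(\beta_{c}^{'},\infty)\subset\Lambda\subset[\beta_{c}^{'},\infty)$, so $\Lambda$ is one of these two intervals and in particular connected.

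The clean part of (M) lives in the continuous model (\ref{eq140715-4.8}): \emph{adding} its two equations cancels the $B_{\beta}$--terms, giving $u+v=K_{1}A_{\beta}(u)$, so any positive solution satisfies the scalar equation $u=\Phi_{\beta}(u):=(K_{1}-K_{2})A_{\beta}(u)+K_{2}B_{\beta}(K_{1}A_{\beta}(u)-u)$ on the region $K_{1}A_{\beta}(u)>u$. Since $f_{\beta}(t)=\tanh((1/2)\beta t)/t$ is strictly increasing in $\beta$ for each fixed $t>0$, and $v\mapsto vf_{\beta}(\sqrt{v^{2}+x^{2}})$ is strictly increasing for each fixed $x\geq0$ (a one--line computation), the maps $A_{\beta},B_{\beta}$ increase with $\beta$, $B_{\beta}$ increases in its argument, and $K_{1}>K_{2}$; hence $\Phi_{\beta}(u)$ is strictly increasing in $\beta$ there. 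So a continuous positive solution $u^{*}$ at $\beta_{0}$ (for which $K_{1}A_{\beta_{0}}(u^{*})-u^{*}=v^{*}>0$) gives $\Phi_{\beta_{1}}(u^{*})>\Phi_{\beta_{0}}(u^{*})=u^{*}$ for $\beta_{1}>\beta_{0}$, and the monotone--iteration argument of Lemma~\ref{lma4.5} in its continuous form \cite{Yisong Yang} then yields a continuous positive solution at $\beta_{1}$ — which is exactly the quoted statement that the superconducting branch persists for all $\beta>\beta_{c}$. The discrete side feeds into this cleanly at $\beta_{0}$: if $(u^{*},v^{*})$ solves (\ref{eq141216-2.7}) at $\beta_{0}$ (as it does for $N$ large, since $\beta_{0}\in\Lambda$), adding its two equations gives $u^{*}+v^{*}=K_{1}A_{h}(u^{*})-K_{2}D_{h}(v^{*})$ with $D_{h}(v^{*})\geq0$ the nonnegative gap between the $\max$-- and $\min$--weighted sums over $[a,b]$, so $v^{*}\leq K_{1}A_{h}(u^{*})-u^{*}$; since $A_{h}$ and $B_{h}$ are \emph{lower} Riemann sums ($A_{h}(u^{*})\leq A_{\beta_{0}}(u^{*})$, $B_{h}(v^{*})\leq B_{\beta_{0}}(v^{*})$), $B_{\beta_{0}}$ is increasing, and $v^{*}\leq K_{1}A_{h}(u^{*})-u^{*}$, one gets $u^{*}=(K_{1}-K_{2})A_{h}(u^{*})+K_{2}B_{h}(v^{*})\leq\Phi_{\beta_{0}}(u^{*})$; thus $u^{*}$ is a subsolution of the continuous reduced equation at $\beta_{0}$, the continuous monotone iteration gives a continuous positive solution at $\beta_{0}$, and hence $\beta_{0}\geq\beta_{c}$.

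\textbf{The main obstacle} is the converse passage — turning ``$\beta_{1}>\beta_{c}$'' back into ``$\beta_{1}\in\Lambda$'', i.e. producing a positive solution of the discrete scheme (\ref{eq141216-2.7}) at $\beta_{1}$ for all large $N$ — and this is precisely where the hypothesis ``$N$ sufficiently large'' is needed. The min--mixed scheme spoils the cancellation above only by the term $D_{h}(v)$, and since the integrands $f_{\beta}(\sqrt{v^{2}+x^{2}})$ are Lipschitz in $x$ on $[a,b]$ uniformly for $v,\beta$ in bounded sets (the $v$'s bounded via Lemma~\ref{lma4.4}), one has $D_{h}(v)=O(1/N)$ and $A_{h},B_{h}\to A_{\beta},B_{\beta}$ uniformly on bounded sets. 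I would then argue: since $\beta_{1}>\beta_{c}$, choose $\beta'\in(\beta_{c},\beta_{1})$; the continuous system has a positive solution $u'$ at $\beta'$, so by the monotonicity of $\Phi$ one has $\Phi_{\beta_{1}}(u')>u'$ with a fixed positive margin; by the uniform convergence and the (eventual) monotonicity and invertibility of the discrete operators, this margin survives discretization, so for every $N$ past a threshold depending only on $\beta_{0},\beta_{1}$ the scheme (\ref{eq141216-2.7}) at $\beta_{1}$ has a nontrivial subsolution built near $u'$, and Lemma~\ref{lma4.5} converts it into a positive solution, giving $\beta_{1}\in\Lambda$. The points needing care are the uniformity of the $O(1/N)$ Riemann--sum estimates over the relevant bounded range of $(u,v)$ and over $\beta\in[\beta',\beta_{1}]$, and the small case distinction in the construction of the subsolution according to the sign of $u'-(K_{1}-K_{2})A_{h}(u')$; both follow the pattern of \cite{Du-Yang} and \cite{Yisong Yang}. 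Once (M) holds, the first paragraph completes the proof.
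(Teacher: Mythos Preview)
Your reduction to the monotonicity claim (M) and the bookkeeping in the first paragraph are exactly right; the paper organizes the argument the same way. The divergence is entirely in how (M) is established.

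The paper proves (M) \emph{directly on the discrete scheme} (\ref{eq141216-2.7}), with no detour through the continuous model and no Riemann--sum convergence arguments. Given a positive solution $(u,v)$ at $\beta$, one uses $f_{\beta+\varepsilon}>f_{\beta}$ pointwise, so replacing $\beta$ by $\beta+\varepsilon$ in the $A_{h}$--type sums already turns the equalities in (\ref{eq141216-2.7}) into strict inequalities in the right direction (this is (\ref{eq140911-4.34})). The remaining issue --- precisely the one you flag as the ``main obstacle'' --- is that the $B$--term on the left of the second equation also grows and may spoil the second subsolution inequality. The paper handles this with a single rescaling trick: choose $r\in(0,1)$ with $B_{\beta+\varepsilon}(rv)=B_{\beta}(v)$ (possible since $B_{\beta+\varepsilon}(0)=0$ and $B_{\beta+\varepsilon}(v)>B_{\beta}(v)$); then $(u,rv)$ satisfies (\ref{eq141217-2.7}) at $\beta+\varepsilon$, and Lemma~\ref{lma4.5} finishes. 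That one device is the entire content of (M), and it works for every $N$ simultaneously.

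Your route via the continuous model and the scalar reduction $\Phi_{\beta}$ is not wrong in spirit, but it has a genuine gap at the very step you mark as the obstacle: to pass from a continuous solution $(u',v')$ at $\beta'<\beta_{1}$ back to a subsolution of (\ref{eq141217-2.7}) at $\beta_{1}$, you must specify the $v$--component of the pair, and you do not. Taking $v_{0}=v'$, the second inequality of (\ref{eq141217-2.7}) reads (to leading order in $N$) $v'+K_{2}B_{\beta_{1}}(v')\leq K_{2}A_{\beta_{1}}(u')$, and there is no a priori reason the increment of $A$ dominates that of $B$ when $\beta'\to\beta_{1}$. Taking instead $v_{0}=K_{1}A_{\beta_{1}}(u')-u'$, the second inequality rearranges to $(K_{1}-K_{2})A_{\beta_{1}}(u')+K_{2}B_{\beta_{1}}(v_{0})\leq u'$, i.e.\ $\Phi_{\beta_{1}}(u')\leq u'$, the \emph{opposite} of the strict inequality you derived. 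So the ``small case distinction'' you defer to \cite{Du-Yang} and \cite{Yisong Yang} is not a detail but the crux, and its resolution is exactly the rescaling $v\mapsto rv$ the paper uses. Once you see that trick, the continuous detour, the $\Phi_{\beta}$--reduction, and the uniform $O(1/N)$ estimates all become unnecessary: (M) is a few lines entirely within the discrete scheme.
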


\begin{proof}
To see this, we show that, if $\beta\in\Lambda$, then $\beta+\varepsilon\in\Lambda$ for any $\varepsilon>0$.

In fact, for $\beta\in\Lambda$, let $(u,v)_{m}$ be a positive solution pair of (\ref{eq141216-2.7}). We rewrite (\ref{eq141216-2.7}) as
\begin{equation}\begin{split}
u=(K_{1}-K_{2})u\sum_{k}^{N}\min_{x\in\overline{I}_{k}}f_{\beta}(\sqrt{u^{2}+x^{2}})|\overline{I}_{k}|\\+
K_{2}v\sum_{k}^{N}\min_{x\in\overline{I^{'}}_{k}}f_{\beta}(\sqrt{v^{2}+x^{2}})|\overline{I^{'}}_{k}|,\\
v+K_{2}v\sum_{k}^{N}\max_{x\in\overline{I^{'}}_{k}}f_{\beta}(\sqrt{v^{2}+x^{2}})|\overline{I^{'}}_{k}|\\
=K_{2}u\sum_{k}^{N}\min_{x\in\overline{I}_{k}}f_{\beta}(\sqrt{u^{2}+x^{2}})|\overline{I}_{k}|.\label{eq140911-4.32}
\end{split}\end{equation}
Since $v>0$, we may choose $r\in(0,1)$ so that
\begin{equation}\begin{split}
B_{\beta+\varepsilon}(rv)=B_{\beta}(v).\label{eq140911-4.33}
\end{split}\end{equation}
However, from (\ref{eq140911-4.32}), we have
\begin{equation}\begin{split}
u<(K_{1}-K_{2})u\sum_{k}^{N}\min_{x\in\overline{I}_{k}}f_{\beta+\varepsilon}(\sqrt{u^{2}+x^{2}})|\overline{I}_{k}|\\+
K_{2}v\sum_{k}^{N}\min_{x\in\overline{I^{'}}_{k}}f_{\beta}(\sqrt{v^{2}+x^{2}})|\overline{I^{'}}_{k}|,\\
v+K_{2}v\sum_{k}^{N}\max_{x\in\overline{I^{'}}_{k}}f_{\beta}(\sqrt{v^{2}+x^{2}})|\overline{I^{'}}_{k}|\\
<K_{2}u\sum_{k}^{N}\min_{x\in\overline{I}_{k}}f_{\beta+\varepsilon}(\sqrt{u^{2}+x^{2}})|\overline{I}_{k}|.\label{eq140911-4.34}
\end{split}\end{equation}
Combining (\ref{eq140911-4.33}) and (\ref{eq140911-4.34}), we obtain
\begin{equation}\begin{split}
u\leq(K_{1}-K_{2})u\sum_{k}^{N}\min_{x\in\overline{I}_{k}}f_{\beta+\varepsilon}(\sqrt{u^{2}+x^{2}})|\overline{I}_{k}|\\+
K_{2}rv\sum_{k}^{N}\min_{x\in\overline{I^{'}}_{k}}f_{\beta+\varepsilon}(\sqrt{(rv)^{2}+x^{2}})|\overline{I^{'}}_{k}|,\\
rv+K_{2}rv\sum_{k}^{N}\max_{x\in\overline{I^{'}}_{k}}f_{\beta+\varepsilon}(\sqrt{(rv)^{2}+x^{2}})|\overline{I^{'}}_{k}|\\
\leq K_{2}u\sum_{k}^{N}\min_{x\in\overline{I}_{k}}f_{\beta+\varepsilon}(\sqrt{u^{2}+x^{2}})|\overline{I}_{k}|.\label{eq140911-4.35}
\end{split}\end{equation}
In other words, we have recovered (\ref{eq141217-2.7}) with $u_{0}=u, v_{0}=rv$, and $\beta$ being replaced by $\beta+\varepsilon$.
Consequently, $\beta+\varepsilon\in\Lambda$.
\end{proof}

Using Lemma \ref{lma4.1}-Lemma \ref{lma4.6}, we obtain the following important result:
\begin{thm}\label{thm4.1}
There exists a number $\beta^{'}_{c}>0$ so that (\ref{eq141216-2.7}) has a nontrivial solution: $u>0,v>0$, for any $\beta:\beta^{'}_{c}<\beta\leq\infty$, while for $\beta<\beta^{'}_c$, the only solution of (\ref{eq141216-2.7}) is the trivial one, $u=0,v=0$.
\end{thm}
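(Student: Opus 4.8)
The plan is to obtain Theorem \ref{thm4.1} as a direct repackaging of Lemma \ref{lma4.6}, using Lemmas \ref{lma4.2}, \ref{lma4.3}, \ref{lma4.5} only to pin down that the set $\Lambda$ is neither empty nor a neighbourhood of $\beta=0$. First I would observe that $\Lambda\neq\emptyset$: by Lemma \ref{lma4.3}, for $\beta$ sufficiently large the scheme \eqref{eq141216-2.7} has a nontrivial subsolution $(u_0,v_0)$ with $u_0>0$, $v_0\geq0$, and Lemma \ref{lma4.5} then upgrades such a subsolution to a genuine positive solution pair (for $N$ large); hence every such $\beta$ lies in $\Lambda$. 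On the other hand, Lemma \ref{lma4.2} shows that for small $\beta>0$ the only nonnegative solution of \eqref{eq141216-2.7} is $(0,0)$, so such $\beta$ are not in $\Lambda$; consequently $\beta'_c=\inf\Lambda$ satisfies $\beta'_c>0$, and it is unique by construction. Thus $\beta'_c\in(0,\infty)$ is well defined.

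For the positive side, I would simply invoke the inclusion $(\beta'_c,\infty)\subset\Lambda$ from Lemma \ref{lma4.6}: for every finite $\beta$ with $\beta'_c<\beta<\infty$, the scheme \eqref{eq141216-2.7} has a solution with $u>0$, $v>0$ once $N$ is large. It then remains to cover the endpoint $\beta=\infty$, where the kernel $f_\beta(t)=\tanh(\tfrac12\beta t)/t$ is replaced by its pointwise limit $f_\infty(t):=1/t$. Since $f_\beta$ increases with $\beta$ to $f_\infty$, and $f_\infty$ is still positive with $A_h$, $B_h$ finite and monotone in their arguments, I would note that the entire subsolution/supersolution construction of Lemmas \ref{lma4.3}--\ref{lma4.5} applies verbatim with $f_\infty$ in place of $f_\beta$ (in particular the BCS discrete equation $u=(K_1-K_2)A_h(u)$ at $\beta=\infty$ still has a positive root by Lemma \ref{lma4.1} and the intermediate value theorem, yielding a subsolution $(u_0,0)$), whence $\beta=\infty$ belongs to $\Lambda$ as well. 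Alternatively one can send $\beta_n\to\infty$ along the solutions already obtained, using the uniform bounds $0<u_0\leq u\leq u^0$ and $0\leq v\leq K_2A_h(u^0)$ recorded in the proof of Lemma \ref{lma4.5} to extract a convergent subsequence and to keep the limit nontrivial.

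For the trivial side, fix $\beta<\beta'_c$. By the definition of $\beta'_c$ and the relation $[0,\beta'_c)\cap\Lambda=\emptyset$ from Lemma \ref{lma4.6}, the scheme \eqref{eq141216-2.7} has no positive solution pair for $N$ large. To upgrade this to the statement that the \emph{only} solution in $\chi$ is the trivial one, I would argue that any nontrivial solution $(u,v)\in\chi$ of \eqref{eq141216-2.7} is automatically positive: if $u=0$ the first equation of \eqref{eq141216-2.7} forces $K_2B_h(v)=0$, and since $B_h(v)>0$ whenever $v>0$ this gives $v=0$; if $v=0$ the second equation forces $K_2A_h(u)=0$, and since $A_h(u)>0$ whenever $u>0$ this gives $u=0$. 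Hence "no positive solution'' and "only the trivial solution'' are equivalent here, and the claimed dichotomy follows.

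The main obstacle is not in the assembly above, which is essentially bookkeeping, but in the lemmas it rests on — above all the connectedness argument of Lemma \ref{lma4.6}, which uses the monotone rescaling $B_{\beta+\varepsilon}(rv)=B_{\beta}(v)$ with $r\in(0,1)$ to convert a solution at $\beta$ into a subsolution at $\beta+\varepsilon$, and the fixed-point/monotone-iteration argument of Lemma \ref{lma4.5} that produces a solution from a subsolution while respecting the universal supersolution of Lemma \ref{lma4.4}. The only genuinely new point in proving Theorem \ref{thm4.1} itself is the endpoint $\beta=\infty$: one must check that replacing $f_\beta$ by the unbounded-at-zero but still integrable kernel $f_\infty(t)=1/t$ does not break finiteness or monotonicity of $A_h$, $B_h$ and that a nontrivial subsolution persists — or, if one prefers the limiting argument, that the limit solution does not collapse to $(0,0)$, which requires a lower bound on $u$ uniform in $\beta$.
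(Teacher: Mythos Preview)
Your proposal is correct and follows exactly the approach of the paper: the paper itself gives no detailed proof of Theorem~\ref{thm4.1} beyond the single line ``Using Lemma~\ref{lma4.1}--Lemma~\ref{lma4.6}, we obtain the following important result,'' so your write-up actually supplies more of the assembly (non-emptiness of $\Lambda$ via Lemmas~\ref{lma4.3} and~\ref{lma4.5}, positivity of $\beta'_c$ via Lemma~\ref{lma4.2}, connectedness via Lemma~\ref{lma4.6}, and the observation that a nontrivial solution in $\chi$ is automatically strictly positive) than the paper does. Your extra care with the endpoint $\beta=\infty$ is not addressed in the paper at all; either of your two arguments suffices and neither introduces any obstruction.
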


\begin{rem}
From Theorem \ref{thm4.1}, we  do not know if the only solution of (\ref{eq141216-2.7}) is the zero solution when $\beta=\beta^{'}_{c}$. We guess that it is true (one can see  Fig. \ref{fig6} and Fig. \ref{fig9}), but we are not able to prove it.
\end{rem}

In fact, we can obtain another important theorem.
\begin{thm}\label{thm4.2}
There exists a number $\beta^{"}_{c}>0$ so that (\ref{eq141216-2.9}) has a nontrivial solution $u>0,v>0$ for any $\beta:\beta^{"}_{c}<\beta\leq\infty$, while for $\beta<\beta^{"}_c$, the only solution of (\ref{eq141216-2.9}) is the trivial one, $u=0,v=0$.
\end{thm}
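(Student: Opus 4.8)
The plan is to reproduce, for the Max-Mixed scheme (\ref{eq141216-2.9}), the whole chain Lemma \ref{lma4.1}--Lemma \ref{lma4.6} that was established for the Min-Mixed scheme, and then to set
$$
\Lambda^{"}=\{\beta>0\ |\ \textrm{for }N\textrm{ sufficiently large, }(\ref{eq141216-2.9})\textrm{ has a positive solution pair}\},\qquad \beta^{"}_{c}=\inf\Lambda^{"}.
$$
First I would introduce the max-type quantities $\widetilde A_h(u)=u\sum_k^N\max_{x\in\overline I_k}f_\beta(\sqrt{u^2+x^2})|\overline I_k|$ and $\widehat B_h(v)=v\sum_k^N\max_{x\in\overline{I'}_k}f_\beta(\sqrt{v^2+x^2})|\overline{I'}_k|$, together with $\widetilde J_h(v)=v+K_2 v\sum_k^N\min_{x\in\overline{I'}_k}f_\beta(\sqrt{v^2+x^2})|\overline{I'}_k|$, so that (\ref{eq141216-2.9}) reads $u=(K_1-K_2)\widetilde A_h(u)+K_2\widehat B_h(v)$ and $\widetilde J_h(v)=K_2\widetilde A_h(u)$. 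Monotonicity of $\widetilde H_h(u):=u-(K_1-K_2)\widetilde A_h(u)$ is the analogue of Lemma \ref{lma4.1} and follows from the continuous argument of \cite{Yisong Yang}. The bound $f_\beta(t)\le\tfrac12\beta$ still gives $\widetilde A_h(u)\le\tfrac12\beta a u$ and $\widehat B_h(v)\le\tfrac12\beta(b-a)v$, hence the small-$\beta$ triviality (analogue of Lemma \ref{lma4.2}); the bound $f_\beta(t)\le 1/t$, which makes $\widetilde A_h$ and $\widehat B_h$ bounded by an absolute constant uniformly in $\beta$ and in the partition, yields a universal supersolution $u^0$ of the first equation exactly as in Lemma \ref{lma4.4}; and the pair $(u_0,v_0)$ with $v_0=0$ and $u_0>0$ solving $u=(K_1-K_2)\widetilde A_h(u)$ (which has a positive solution for large $\beta$, cf.\ \cite{Du-Yang}) is a subsolution, which is precisely (\ref{eq141218-2.7}), the analogue of Lemma \ref{lma4.3}.

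The core is the monotone iteration (\ref{eq140719-4.14}), the analogue of Lemma \ref{lma4.5}. Since $f_\beta$ is decreasing one has $\min_{x\in\overline{I'}_k}f_\beta(\sqrt{v^2+x^2})=f_\beta(\sqrt{v^2+\xi_k^2})$ with $\xi_k=\sup\{|x|:x\in\overline{I'}_k\}$, and an elementary derivative computation shows that each map $v\mapsto v f_\beta(\sqrt{v^2+\xi_k^2})$ is strictly increasing; hence $\widetilde J_h$ is strictly increasing with $\widetilde J_h(0)=0$ and $\widetilde J_h(\infty)=\infty$, so $v_{n+1}$ is uniquely and positively determined from $\widetilde J_h(v_{n+1})=K_2\widetilde A_h(u_{n+1})$ whenever $u_{n+1}>0$, while $u_{n+1}$ is uniquely determined from $\widetilde H_h(u_{n+1})=K_2\widehat B_h(v_n)$ because $\widetilde H_h$ is monotone with $\widetilde H_h(0)=0$ and $\widetilde H_h(\infty)=\infty$. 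Starting from $(u_1,v_1)=(u_0,v_0)$, the subsolution inequalities (\ref{eq141218-2.7}) together with the supersolution property of $u^0$ give inductively $0<u_0=u_1\le u_2\le\cdots\le u^0$ and $0\le v_0=v_1\le v_2\le\cdots\le K_2\widetilde A_h(u^0)$, using that $\widetilde H_h$, $\widetilde A_h$, $\widehat B_h$ and $\widetilde J_h$ are all monotone; letting $n\to\infty$ produces a solution $(u,v)_M$ of (\ref{eq141216-2.9}) with $u>0$, and then $u>0\Rightarrow\widetilde A_h(u)>0\Rightarrow\widetilde J_h(v)>0\Rightarrow v>0$. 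Conversely any positive solution of (\ref{eq141216-2.9}) is trivially its own nontrivial subsolution, so (\ref{eq141216-2.9}) has a positive solution if and only if it has a nontrivial subsolution.

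It then remains to show that $\Lambda^{"}$ is connected with $\beta^{"}_{c}>0$, the analogue of Lemma \ref{lma4.6}. Given $\beta\in\Lambda^{"}$ with positive solution $(u,v)_M$, I would choose $r\in(0,1)$ so that $rv\sum_k^N\max_{x\in\overline{I'}_k}f_{\beta+\varepsilon}(\sqrt{(rv)^2+x^2})|\overline{I'}_k|=v\sum_k^N\max_{x\in\overline{I'}_k}f_{\beta}(\sqrt{v^2+x^2})|\overline{I'}_k|$; this is possible by the intermediate value theorem, since the left side is continuous in $r$, increasing (because $s\mapsto s f_{\beta+\varepsilon}(\sqrt{s^2+\eta_k^2})$ is increasing, with $\eta_k=\inf\{|x|:x\in\overline{I'}_k\}\ge a>0$), vanishes as $r\to0^+$, and at $r=1$ exceeds the right side because $f_{\beta+\varepsilon}>f_\beta$. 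Using $f_{\beta+\varepsilon}>f_\beta$ in the $\widetilde A_h$-terms of (\ref{eq141216-2.9}) at parameter $\beta$ produces strict inequalities which, combined with this choice of $r$, reproduce (\ref{eq141218-2.7}) at $\beta+\varepsilon$ with $(u_0,v_0)=(u,rv)$, so $\beta+\varepsilon\in\Lambda^{"}$; together with the large-$\beta$ fact and the small-$\beta$ triviality this gives $(\beta^{"}_{c},\infty)\subset\Lambda^{"}$, $[0,\beta^{"}_{c})\cap\Lambda^{"}=\phi$, and $\beta^{"}_{c}>0$, which is exactly the assertion of Theorem \ref{thm4.2}. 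I expect the main obstacle to be this monotone-iteration step: with $\max$ in the first equation but $\min$ inside $\widetilde J_h$, one must verify that both updates stay well defined, that the two sequences remain trapped between the subsolution and the universal supersolution $u^0$, and above all that the limit is strictly positive in \emph{both} components --- the strict monotonicity of $v\mapsto v f_\beta(\sqrt{v^2+\xi^2})$ being the delicate ingredient that keeps $v_n$ from degenerating to zero.
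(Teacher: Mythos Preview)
Your proposal is correct and follows exactly the route the paper takes: the paper's own proof of Theorem \ref{thm4.2} consists of the single sentence ``Similar to Theorem \ref{thm4.1}, the proof of this theorem can be carried out,'' and you have simply written out what that similarity means by redoing Lemmas \ref{lma4.1}--\ref{lma4.6} with the appropriate $\min/\max$ swaps for the Max-Mixed scheme (\ref{eq141216-2.9}) and its iteration (\ref{eq140719-4.14}). In fact your write-up is more detailed than the paper's, since you make explicit the strict monotonicity of $\widetilde J_h$ and the intermediate-value choice of $r$ in the connectedness step, both of which the paper leaves implicit.
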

\begin{proof}
Similar to Theorem \ref{thm4.1}, the proof of this theorem can be carried out.
\end{proof}

Additionally, let us see an interesting comparison theorem.
\begin{thm}\label{thm4.3}
Let $\beta_{c}$, $\beta^{'}_{c}$ and $\beta^{"}_{c}$ are the corresponding critical numbers of (\ref{eq140715-4.8}), (\ref{eq141216-2.7}) and (\ref{eq141216-2.9}), respectively. Then

$$\beta^{'}_{c}\geq\beta_{c}\geq\beta^{"}_{c}.$$\\
Besides, if $(u,v)_{m}$, $(u,v)_{M}$ are  solutions of (\ref{eq141216-2.7}) and (\ref{eq141216-2.9}) respectively, $(u,v)$ is the solution of (\ref{eq140715-4.8}),  then

$$(u,v)_{m}\leq(u,v)\leq(u,v)_{M}.$$

\end{thm}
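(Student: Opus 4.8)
The plan is to prove the two assertions separately, but both will rest on the same monotonicity/sandwiching idea that relates the min-mixed scheme \eqref{eq141216-2.7}, the continuous system \eqref{eq140715-4.8}, and the max-mixed scheme \eqref{eq141216-2.9} through the elementary pointwise inequalities
\begin{equation}\begin{split}
\min_{x\in\overline{I}_{k}}f_{\beta}(\sqrt{w^{2}+x^{2}})\ \le\ f_{\beta}(\sqrt{w^{2}+x^{2}})\ \le\ \max_{x\in\overline{I}_{k}}f_{\beta}(\sqrt{w^{2}+x^{2}}),\quad x\in\overline{I}_{k},
\end{split}\end{equation}
and the analogous inequalities over the $\overline{I^{'}}_{k}$. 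Summing against $|\overline{I}_{k}|$, $|\overline{I^{'}}_{k}|$ and using $\cup_k\overline{I}_k\supset I$, these give the operator comparisons $A_{h}(w)\le A_{\beta}(w)\le \widetilde{A}_{h}(w)$ and $B_{h}(w)\le B_{\beta}(w)\le \widetilde{B}_{h}(w)$, where $\widetilde{A}_h,\widetilde{B}_h$ denote the max-versions appearing in \eqref{eq141216-2.9}. This is the mechanism that forces the min-scheme to be ``harder'' to excite into the superconducting phase and the max-scheme ``easier.''

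For the temperature inequality $\beta^{'}_{c}\ge\beta_{c}\ge\beta^{"}_{c}$: I would argue that for any $\beta$ admitting a nontrivial solution of \eqref{eq141216-2.7}, the triple $(u_0,v_0)=(u,v)_m$ is a subsolution of the continuous system \eqref{eq140715-4.8} in the sense used in \cite{Yisong Yang} (the first equation gains because $A_\beta\ge A_h$ raises the right-hand side, the second equation gains because $\widetilde{B}_{h}$ in the min-scheme overestimates $B_\beta$, so replacing it by $B_\beta$ only decreases the left-hand side). By the subsolution/monotone-iteration argument behind Theorem 2.1 (as in the proof of Lemma \ref{lma4.6}), the existence of such a subsolution puts $\beta$ in the admissible set for \eqref{eq140715-4.8}, whence $\beta\ge\beta_c$; taking the infimum over such $\beta$ gives $\beta^{'}_c\ge\beta_c$. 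Symmetrically, any nontrivial solution $(u,v)$ of \eqref{eq140715-4.8} is a subsolution of the max-mixed scheme \eqref{eq141216-2.9} (now $\widetilde{A}_h\ge A_\beta$ helps the first equation and $B_h\le B_\beta$ helps the second), so by Theorem \ref{thm4.2} that same $\beta$ lies in the admissible set of \eqref{eq141216-2.9}, giving $\beta_c\ge\beta^{"}_c$.

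For the sandwiching of the solutions $(u,v)_m\le(u,v)\le(u,v)_M$: at a fixed $\beta$ with all three solutions present, I would run the monotone iteration \eqref{eq140721-4.12} for the min-scheme but starting from the subsolution $(u_0,v_0)=(u,v)$ — the continuous solution. The point established just above is that $(u,v)$ is a \emph{supersolution} of \eqref{eq141216-2.7} (reverse the two gains), so the same iteration that converges upward from the subsolution produced in Lemma \ref{lma4.3} stays below $(u,v)$; since the iteration's limit is independent of the admissible starting subsolution (uniqueness of the minimal positive solution, by the monotonicity of $H_h$ in Lemma \ref{lma4.1} and of $J_h$ in \eqref{eq141121-2.18}), the limit $(u,v)_m$ satisfies $(u,v)_m\le(u,v)$. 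The right-hand inequality is dual: $(u,v)$ is a subsolution of the max-scheme \eqref{eq141216-2.9}, and iterating \eqref{eq140719-4.14} upward from it produces a solution $\ge(u,v)$, which by the corresponding uniqueness is $(u,v)_M$.

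The main obstacle I anticipate is the uniqueness/comparison bookkeeping: the argument needs that the positive solution produced by the monotone iteration does not depend on which admissible subsolution one starts from, and that starting from a supersolution the iteration decreases to (or stays below) it. This requires a genuine monotone-operator comparison for the coupled system rather than for a single equation — one must check that the map $(u_n,v_n)\mapsto(u_{n+1},v_{n+1})$ defined implicitly by \eqref{eq140721-4.12} is order-preserving on $\chi$, using Lemma \ref{lma4.1} (to solve the first, implicit equation monotonically in the data $K_2 B_h(v_n)$) and the strict monotonicity of $J_h$ (to solve the second). Once that order-preservation and the two-sided ordering of sub/supersolutions are in hand, both halves of the theorem follow by the standard squeeze; I would state the order-preservation as the one nontrivial lemma and keep the rest at the level of the comparisons above.
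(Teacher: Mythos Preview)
Your proposal is correct and follows essentially the same subsolution/monotone-iteration strategy as the paper: a positive solution of the ``harder'' scheme is shown to be a subsolution of the ``easier'' one, which both places $\beta$ in the latter's admissible set (giving the ordering of critical values) and, via iteration, yields the ordering of solutions.

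The one place where you take a slightly longer detour is the left inequality $(u,v)_{m}\le(u,v)$. You argue that $(u,v)$ is a supersolution of the min-scheme \eqref{eq141216-2.7}, so the min-scheme iteration from any subsolution stays below it, and then you need that the limit is independent of the starting subsolution in order to identify it with $(u,v)_{m}$. The paper avoids this uniqueness bookkeeping entirely by running the comparison in the other direction: since $(u,v)_{m}$ is a subsolution of the \emph{continuous} system \eqref{eq140715-4.8}, one iterates the continuous scheme upward from $(u,v)_{m}$, and the limit is by construction a solution $(u,v)$ of \eqref{eq140715-4.8} dominating $(u,v)_{m}$. This is exactly parallel to how both you and the paper handle the right inequality $(u,v)\le(u,v)_{M}$, and it makes the ``main obstacle'' you anticipate disappear. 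Your order-preservation lemma for the iteration map is still the right engine underneath, but you do not need the stronger uniqueness statement.
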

\begin{proof}
 For $\beta>\beta^{'}_{c}$, let $(u,v)_{m}$ be a nontrivial solution of (\ref{eq141216-2.7}) in $\chi$. Then $(u,v)_{m}$ is a subsolution of (\ref{eq140715-4.8}). Thus $(u,v)_{m}\leq(u,v)$ which can be obtained by interating from $(u,v)_{m}$. Consequently, $\beta>\beta_{c}$. Clearly, $\beta^{'}_{c}\geq\beta_{c}$ and $(u,v)_{m}\leq(u,v)$.

Next, take $\beta>\beta_{c}$ and assume that $(u,v)$ is a nontrivial solution of (\ref{eq140715-4.8}) in $\chi$. Then $(u,v)$ is a subsolution of (\ref{eq141216-2.9}).
Thus $(u,v)\leq(u,v)_{M}$ ($(u,v)_{M}$ is a nontrivial solution of (\ref{eq141216-2.9}). Consequently, $\beta>\beta^{"}_{c}$. So $\beta_{c}\geq\beta^{"}_{c}$.

The proof of Theorem \ref{thm4.3} is completed.
\end{proof}

{\bf Case II:} $K_{1}\leq K_{2}$.
In this case, (\ref{eq140715-4.8}) has been rewritted as
\begin{equation}\begin{split}
u+(K_{2}-K_{1})A_{\beta}(u)=K_{2}B_{\beta}(v),\\
v+K_{2}B_{\beta}(v)=K_{2}A_{\beta}(u).\label{eq140715-4.13}
\end{split}\end{equation}

The  Min-Mixed scheme below is
\begin{equation}\begin{split}
u+(K_{2}-K_{1})u\sum_{k}^{N}\max_{x\in\overline{I}_{k}}f_{\beta}(\sqrt{u^{2}+x^{2}})|\overline{I}_{k}|\\
=K_{2}v\sum_{k}^{N}\min_{x\in\overline{I^{'}}_{k}}f_{\beta}(\sqrt{v^{2}+x^{2}})|\overline{I^{'}}_{k}|,\\
v+K_{2}v\sum_{k}^{N}\max_{x\in\overline{I^{'}}_{k}}f_{\beta}(\sqrt{v^{2}+x^{2}})|\overline{I^{'}}_{k}|\\
=K_{2}u\sum_{k}^{N}\min_{x\in\overline{I}_{k}}f_{\beta}(\sqrt{u^{2}+x^{2}})|\overline{I}_{k}|.\label{eq141216-2.30}
\end{split}\end{equation}
As before, we can show that the system (\ref{eq141216-2.30}) has a positive solution pair if and only if there exists a nontrivial subsolution, $(u_{0},v_{0})$, satisfying
\begin{equation}\begin{split}
u_{0}+(K_{2}-K_{1})u_{0}\sum_{k}^{N}\max_{x\in\overline{I}_{k}}f_{\beta}(\sqrt{u_{0}^{2}+x^{2}})|\overline{I}_{k}|\\
\leq K_{2}v_{0}\sum_{k}^{N}\min_{x\in\overline{I^{'}}_{k}}f_{\beta}(\sqrt{v_{0}^{2}+x^{2}})|\overline{I^{'}}_{k}|,\\
v_{0}+K_{2}v_{0}\sum_{k}^{N}\max_{x\in\overline{I^{'}}_{k}}f_{\beta}(\sqrt{v_{0}^{2}+x^{2}})|\overline{I^{'}}_{k}|\\
\leq K_{2}u\sum_{k}^{N}\min_{x\in\overline{I}_{k}}f_{\beta}(\sqrt{u_{0}^{2}+x^{2}})|\overline{I}_{k}|,\label{eq141218-2.30}
\end{split}\end{equation}
where $u_{0}$, $v_{0}$ are positive.\\
In fact, define the  Min-Mixed interation scheme below as
\begin{equation}\begin{split}
u_{n+1}+(K_{2}-K_{1})u_{n+1}\sum_{k}^{N}\max_{x\in\overline{I}_{k}}f_{\beta}(\sqrt{u_{n+1}^{2}+x^{2}})|\overline{I}_{k}|\\
=K_{2}v_{n}\sum_{k}^{N}\min_{x\in\overline{I^{'}}_{k}}f_{\beta}(\sqrt{v_{n}^{2}+x^{2}})|\overline{I^{'}}_{k}|,\\
v_{n+1}+K_{2}v_{n+1}\sum_{k}^{N}\max_{x\in\overline{I^{'}}_{k}}f_{\beta}(\sqrt{v_{n+1}^{2}+x^{2}})|\overline{I^{'}}_{k}|\\
=K_{2}u_{n+1}\sum_{k}^{N}\min_{x\in\overline{I}_{k}}f_{\beta}(\sqrt{u_{n+1}^{2}+x^{2}})|\overline{I}_{k}|,\\
n=1, 2, \ldots;\ v=v_{0}.\label{eq141216-4.12}
\end{split}\end{equation}
Using the monotonicity of the function $$P_{h}(u)=u+(K_{2}-K_{1})u\sum_{k}^{N}\max_{x\in\overline{I}_{k}}f_{\beta}(\sqrt{u^{2}+x^{2}})|\overline{I}_{k}|,$$ and $$Q_{h}(v)=v+K_{2}v\sum_{k}^{N}\max_{x\in\overline{I^{'}}_{k}}f_{\beta}(\sqrt{v^{2}+x^{2}})|\overline{I^{'}}_{k}|,$$ we see that the sequences ${u_{n}}$ and ${v_{n}}$ are well defined and that
\begin{equation}\begin{split}
u_{0}=u_{1}\leq u_{2}\leq\ldots \leq u_{n}\leq \ldots,\ v_{0}=v_{1}\leq v_{2}\leq\ldots \leq v_{n}\leq \ldots,
\end{split}\end{equation}
Since the function $$B_{h}(v)=v\sum_{k}^{N}\min_{x\in\overline{I^{'}}_{k}}f_{\beta}(\sqrt{v^{2}+x^{2}})|\overline{I^{'}}_{k}|,$$ and  $$A_{h}(u)=u\sum_{k}^{N}\min_{x\in\overline{I}_{k}}f_{\beta}(\sqrt{u^{2}+x^{2}})|\overline{I}_{k}|,$$ are bounded, it follows from (\ref{eq141216-4.12}) that ${u_{n}}$ and ${v_{n}}$ are bounded sequences, Taking the limit $ n\rightarrow \infty$ in (\ref{eq141216-4.12}), we see that $u=\lim_{n\rightarrow \infty}u_{n}$ and $v=\lim_{n\rightarrow \infty}v_{n}$ make a solution pair to the system (\ref{eq141216-2.30}).\\
The Max-Mixed scheme up  is
\begin{equation}\begin{split}
u+(K_{2}-K_{1})u\sum_{k}^{N}\min_{x\in\overline{I}_{k}}f_{\beta}(\sqrt{u^{2}+x^{2}})|\overline{I}_{k}|\\
=K_{2}v\sum_{k}^{N}\max_{x\in\overline{I^{'}}_{k}}f_{\beta}(\sqrt{v^{2}+x^{2}})|\overline{I^{'}}_{k}|,\\
v+K_{2}v\sum_{k}^{N}\min_{x\in\overline{I^{'}}_{k}}f_{\beta}(\sqrt{v_{n+1}^{2}+x^{2}})|\overline{I^{'}}_{k}|\\
=K_{2}u\sum_{k}^{N}\max_{x\in\overline{I}_{k}}f_{\beta}(\sqrt{u^{2}+x^{2}})|\overline{I}_{k}|,\label{eq141216-2.32}
\end{split}\end{equation}
Obviously, $(u_{0},v_{0})$ defined in (\ref{eq141218-2.30}) is also a subsolution of (\ref{eq141216-2.32}), and
the Max-Mixed interation scheme up  is
\begin{equation}\begin{split}
u_{n+1}+(K_{2}-K_{1})u_{n+1}\sum_{k}^{N}\min_{x\in\overline{I}_{k}}f_{\beta}(\sqrt{u_{n+1}^{2}+x^{2}})|\overline{I}_{k}|\\
=K_{2}v_{n}\sum_{k}^{N}\max_{x\in\overline{I^{'}}_{k}}f_{\beta}(\sqrt{v_{n}^{2}+x^{2}})|\overline{I^{'}}_{k}|,\\
v_{n+1}+K_{2}v_{n+1}\sum_{k}^{N}\min_{x\in\overline{I^{'}}_{k}}f_{\beta}(\sqrt{v_{n+1}^{2}+x^{2}})|\overline{I^{'}}_{k}|\\
=K_{2}u_{n+1}\sum_{k}^{N}\max_{x\in\overline{I}_{k}}f_{\beta}(\sqrt{u_{n+1}^{2}+x^{2}})|\overline{I}_{k}|,\\
n=1, 2, \ldots;\ v=v_{0}.\label{eq141216-4.14}
\end{split}\end{equation}
The convergence of (\ref{eq141216-4.14}) which similar to (\ref{eq141216-4.12}) will no longer be proved here.
And the choice of subsolution can reference \cite{Yisong Yang}. In section 3 , we shall present the numerical results.

\section{Numerical Test}
\setcounter{equation}{0}
In this section, we shall caculate specifically a example which corresponding to the above section.

{\bf Case 1}: $K_{1}>K_{2}$.
 Taking
\begin{equation}\begin{split}
a=1,\ b=1.5,\\
K_{1}=2, K_{2}=0.1.\\
\end{split}
\end{equation}

\begin{figure}[H]
\centering
\includegraphics[height=0.5\textwidth, width=\textwidth]{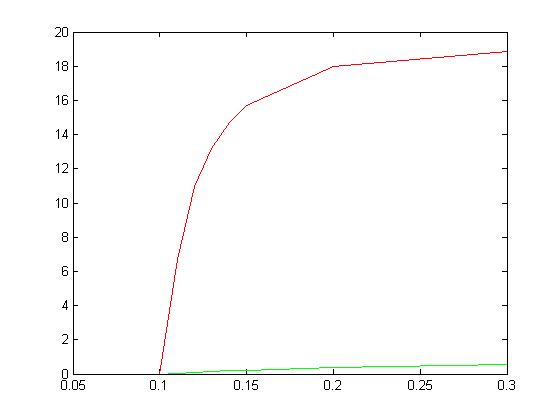}
\caption{Min-Mixed interation($a=1, \beta_{c}\approx0.1$), $N=50$, $(u(up), v(down))$ vs $\beta$}\label{fig6}
\end{figure}

\begin{figure}[H]
\centering
\includegraphics[height=0.5\textwidth, width=\textwidth]{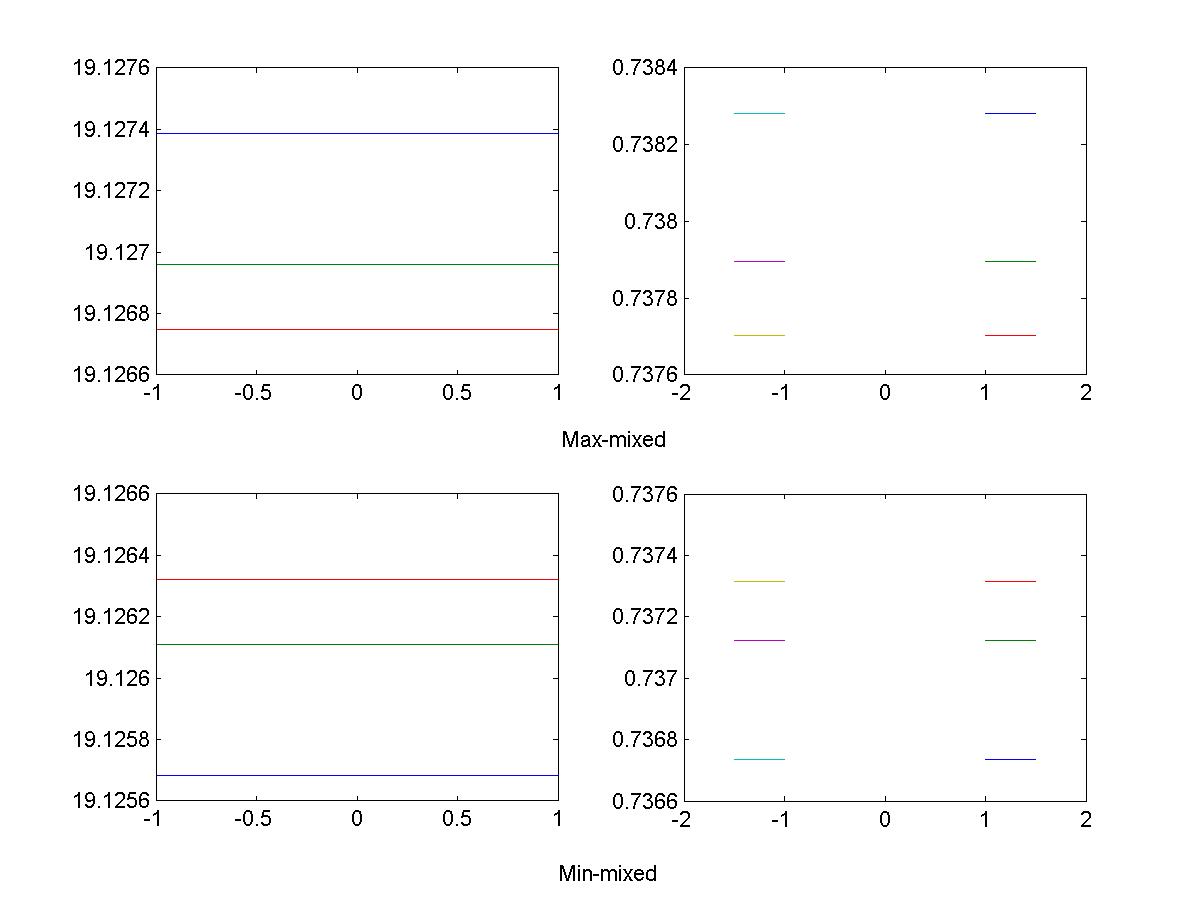}
\caption{Max-Mixed interation(from up to down), Min-Mixed interation(from bottom to up), $u(left),v(right),\beta=6, N=50, 100, 200$}\label{fig7}
\end{figure}

\begin{figure}[H]
\centering
\includegraphics[height=0.5\textwidth, width=\textwidth]{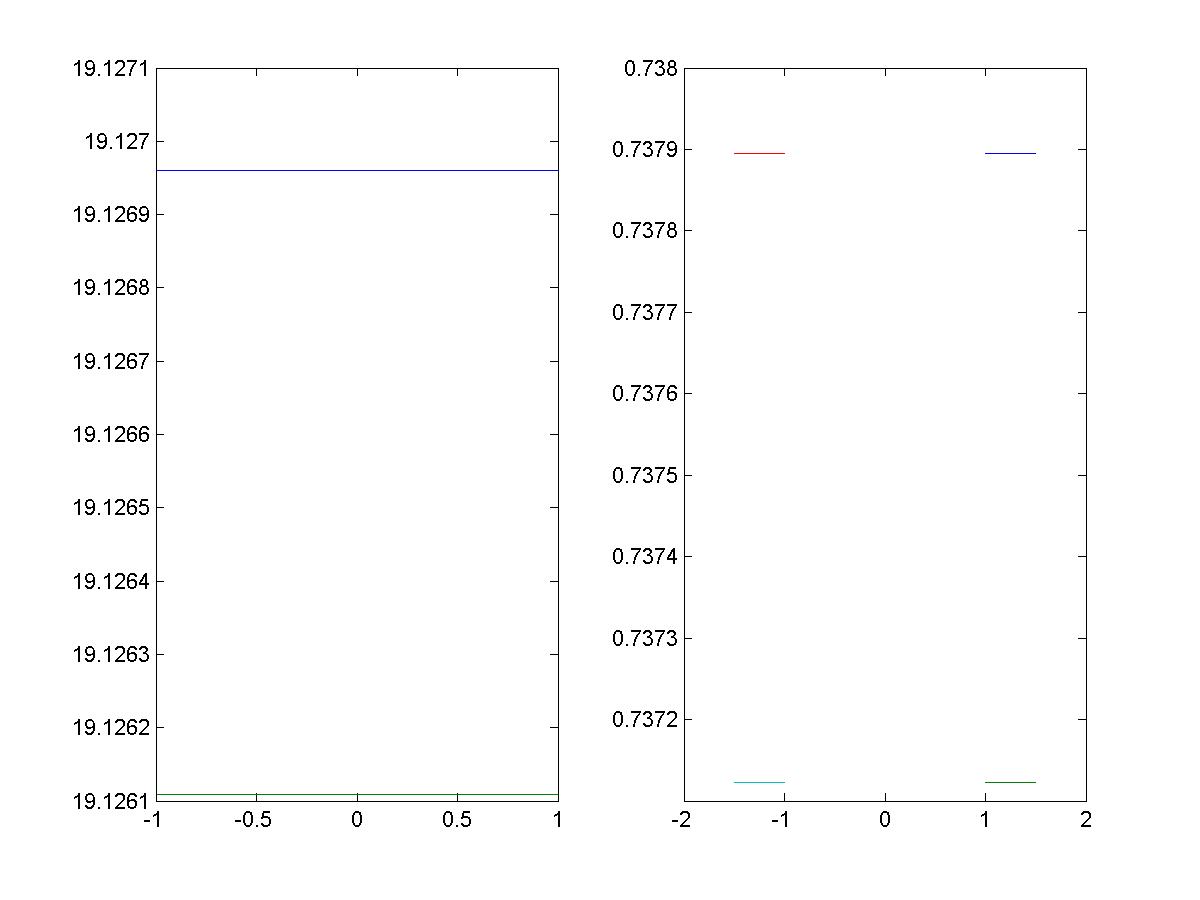}
\caption{The solution Max-Mixed interation and Min-Mixed interation (from top down ),$u(left),v(right)$, $N=100$, $\beta=6$}\label{fig8}
\end{figure}

Next, we only increase the value of $a$ so that we observe the change of $\beta_{c}$. To do that, we choose $a=1.1$.
\begin{figure}[H]
\centering
\includegraphics[height=0.46\textwidth, width=\textwidth]{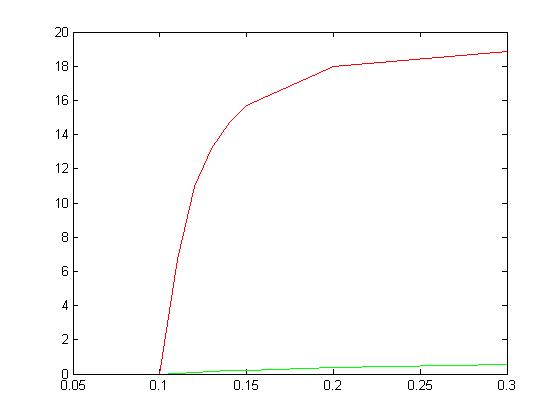}
\caption{Min-Mixed interation ($a=1.1$, $\beta_{c}\approx0.095$), $N=50$, $(u(top), v(bottom))$ vs $\beta$}\label{fig9}
\end{figure}

Comparing Fig.\ref{fig6} with Fig.\ref{fig9}, we see that $\beta_{c}$ decreases as $a$ becomes bigger. In fact, we can show the above fact is right numerically from Fig. \ref{fig10}, which fits the physical phenomenon very well.

\begin{figure}[H]
\centering
\includegraphics[height=0.5\textwidth, width=\textwidth]{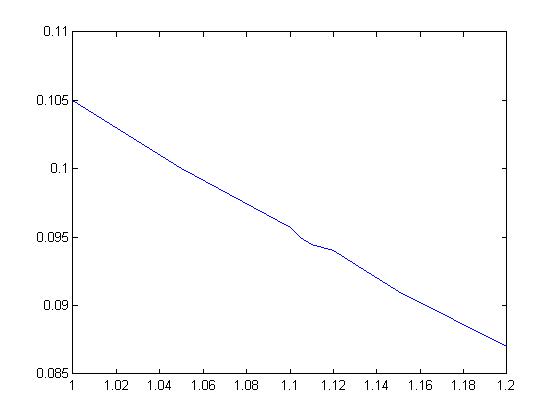}
\caption{Min-Mixed interation $N=50$, $\beta_{c}$ vs $a$}\label{fig10}
\end{figure}

Now, we only change the value of $K_{2}$ to observe the change of $\beta_{c}$.
\begin{figure}[H]
\centering
\includegraphics[height=0.5\textwidth, width=\textwidth]{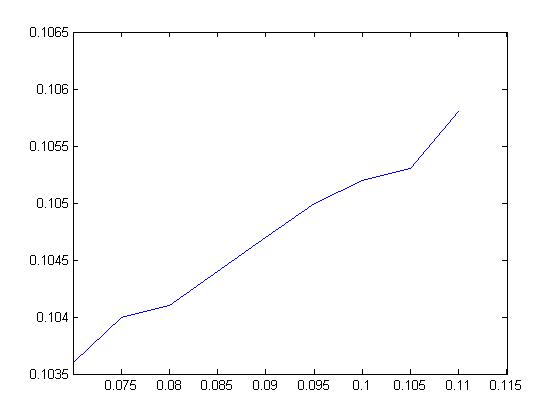}
\caption{Min-Mixed interation $N=50$, $\beta_{c}$ vs $K_{2}$}\label{fig11}
\end{figure}

From Fig.\ref{fig11}, we find that $\beta_{c}$ increases as $K_{2}$ increases.\\
{\bf Case 2}: $K_{1}\leq K_{2}$.
 Taking
\begin{equation}\begin{split}
a=0.5,\ b=1.5,\\
K_{1}=0.01, K_{2}=0.1.\\
\end{split}
\end{equation}
\begin{figure}[H]
\centering
\includegraphics[height=0.5\textwidth, width=\textwidth]{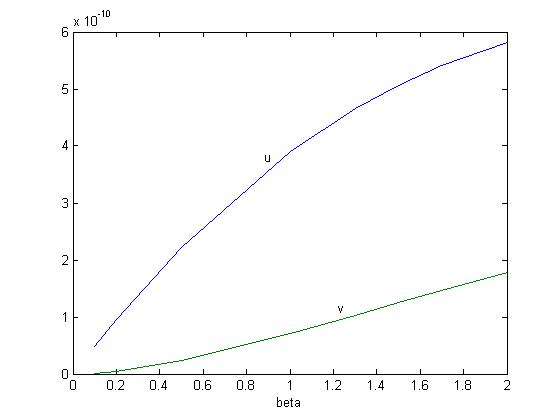}
\caption{Min-Mixed interation($N=50$, $(u(up), v(down))_m$ vs $\beta$}\label{fig12}
\end{figure}
\begin{figure}[H]
\centering
\includegraphics[height=0.45\textwidth, width=\textwidth]{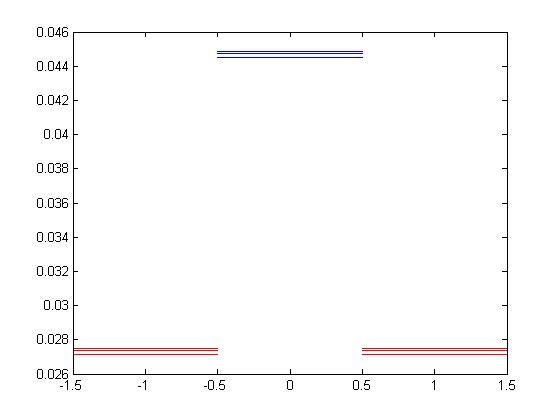}
\caption{Min-Mixed interation(from bottom to up), $u(middle),v(left , right),\beta=5, N=50, 100, 200$}\label{fig13}
\end{figure}
\begin{figure}[H]
\centering
\includegraphics[height=0.45\textwidth, width=\textwidth]{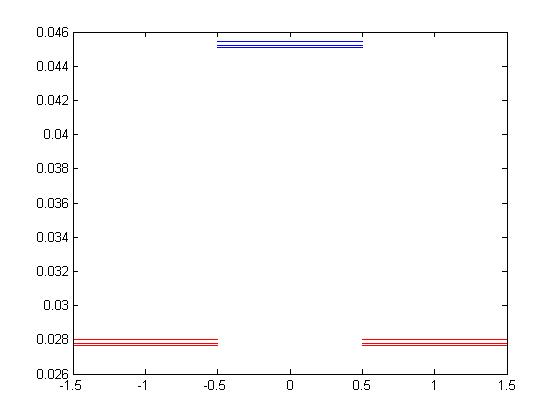}
\caption{Max-Mixed interation(from up to down),$u(middle),v(left , right),\beta=5, N=50, 100, 200$}\label{fig14}
\end{figure}
\begin{figure}[H]
\centering
\includegraphics[height=0.5\textwidth, width=\textwidth]{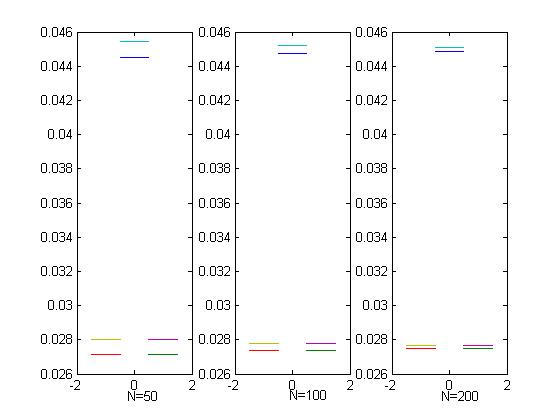}
\caption{The solution of Max-Mixed interation vs Min-Mixed interation(from top down ), $N=50,100,200, \beta=5$}\label{fig15}
\end{figure}
Next, we only increase the value of $a$ so that we observe the change of $(u,v)_{m}$.
\begin{figure}[H]
\centering
\includegraphics[height=0.45\textwidth, width=\textwidth]{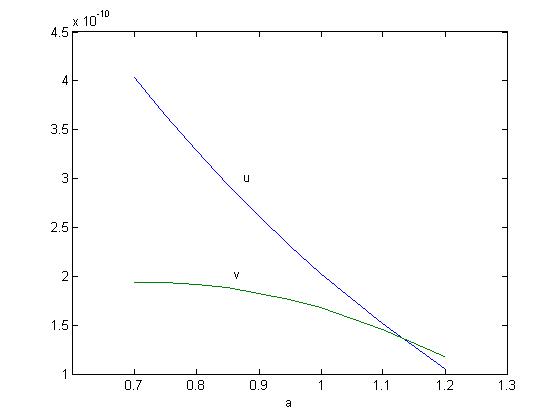}
\caption{Min-Mixed interation for $N=50$, $(u, v)_m$ vs $a$}\label{fig16}
\end{figure}
From Fig.\ref{fig16}, we find that $u$ and $v$ of the Min-Mixed interative scheme decrease with $a$ closing to $b$. Thus, $\beta_{c}$ will increase as $a$ close to $b$.\\

We now just change $K_{2}$ to observe the change of $(u,v)_{m}$.
\begin{figure}[H]
\centering
\includegraphics[height=0.45\textwidth, width=\textwidth]{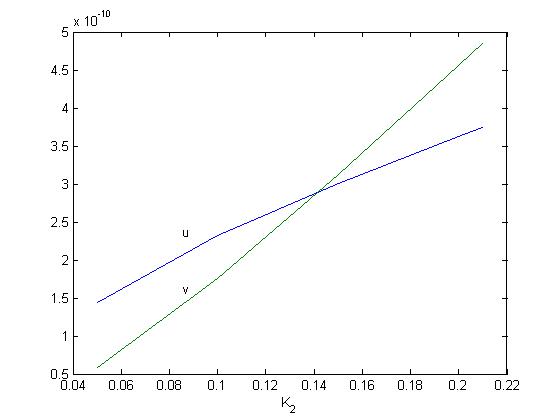}
\caption{Min-Mixed interation $N=50$, $(u,v)_{m}$ vs $K_{2}$}\label{fig17}
\end{figure}
From Fig.\ref{fig17}, we can see that $(u,v)_{m}$ increases as $K_{2}$ increases. Namely, $\beta_{c}$ will decrease as $K_{2}$ increases.\\



\noindent{\bf Acknowledgments.} {The work is supported by the Natural Science Foundation of China(No. 10901047).}

\end{document}